\title{Essential disks and semi-essential surfaces in
3-manifolds}
\author{Charalampos Charitos and Ulrich Oertel}
\date{May 5, 2010}
\newtheorem{thm}{Theorem}[section] \newtheorem{lemma}[thm]{Lemma}
\newtheorem{corollary}[thm]{Corollary}
\newtheorem{proposition}[thm]{Proposition}
 \newtheorem*{claim*}{Claim}
 \theoremstyle{definition}
\newtheorem{defn}[thm]{Definition}
 \newtheorem{ex}[thm]{Example}
\theoremstyle{remark}
\begin{document}

\maketitle

\centerline{\bf $\,$}
\vskip 0.5in

\def\HDS{half-disk sum}

\def\Length{\text{Length}}

\def\Area{\text{Area}}
\def\Im{\text{Im}}
\def\im{\text{Im}}
\def\rel{\text{ rel }}
\def\irred{irreducible}
\def\half{spinal pair }
\def\spinal{\half}
\def\spinals{\halfs}
\def\halfs{spinal pairs }
\def\reals{\mathbb R}
\def\rationals{\mathbb Q}
\def\complex{\mathbb C}
\def\naturals{\mathbb N}
\def\integers{\mathbb Z}
\def\id{\text{id}}

\def\proj{P}
\def\hyp {\hbox {\rm {H \kern -2.8ex I}\kern 1.15ex}}

\def\Diff{\text{Diff}}

\def\weight#1#2#3{{#1}\raise2.5pt\hbox{$\centerdot$}\left({#2},{#3}\right)}
\def\intr{{\rm int}}
\def\inter{\ \raise4pt\hbox{$^\circ$}\kern -1.6ex}
\def\Cal{\cal}
\def\from{:}
\def\inverse{^{-1}}
\def\Max{{\rm Max}}
\def\Min{{\rm Min}}
\def\fr{{\rm fr}}
\def\embed{\hookrightarrow}
\def\Genus{{\rm Genus}}
\def\Z{Z}
\def\X{X}

\def\roster{\begin{enumerate}}
\def\endroster{\end{enumerate}}
\def\intersect{\cap}
\def\definition{\begin{defn}}
\def\enddefinition{\end{defn}}
\def\subhead{\subsection\{}
\def\theorem{thm}
\def\endsubhead{\}}
\def\head{\section\{}
\def\endhead{\}}
\def\example{\begin{ex}}
\def\endexample{\end{ex}}
\def\ves{\vs}
\def\mZ{{\mathbb Z}}
\def\M{M(\Phi)}
\def\bdry{\partial}
\def\hop{\vskip 0.15in}
\def\hip{\vskip0.05in}
\def\mathring{\inter}
\def\trip{\vskip 0.09in}
\def\PML{\mathscr{PML}}
\def\H{\mathscr{H}}
\def\C{\mathscr{C}}
\def\S{\mathscr{S}}
\def\T{\mathscr{T}}
\def\E{\mathscr{E}}
\def\K{\mathscr{K}}
\def\BL{\mathscr{BL}}
\def\L{\mathscr{L}}
\def\suchthat{|}
\newcommand\invlimit{\varprojlim}
\newcommand\congruent{\equiv}
\newcommand\modulo[1]{\pmod{#1}}
\def\ML{\mathscr{ML}}
\def\Stack{\mathscr{T}}
\def\M{\mathscr{M}}
\def\A{\mathscr{A}}
\def\union{\cup}
\def\atlas{\mathscr{A}}
\def\interior{\text{Int}}
\def\frontier{\text{Fr}}
\def\composed{\circ}

\begin{abstract} If $M$ is a manifold with compressible boundary, we analyze
essential disks in $M$, as well as incompressible, but not necessarily
$\bdry$-incompressible, surfaces in $M$.   We are most interested in the case where
$M$ is a handlebody or compression body.  The analysis depends on a new normal
surface theory.  We hope the normal surface theory will be used in other papers to
describe objects representing limits of essential disks in a handlebody or a
3-manifold with compressible boundary.  For certain automorphisms of handlebodies,
these disk limits should serve as invariant objects akin to laminations and analogous to
the invariant laminations for pseudo-Anosov automorphisms of surfaces.
\end{abstract}

\section{Introduction}\label{Intro}  Let $H$ be a 3-dimensional handlebody of genus
$g$.  In
\cite{HM:HandlebodyMapClass}, Masur describes the limit set $\L$ of the action of
the mapping class group on the projective lamination space $\PML(\bdry H)$ of $\bdry
H$.  It is the closure in $\PML(\bdry H)$ of points represented by simple closed
curves bounding disks in
$H$.  Clearly it would be interesting to know whether every point in $\L$ ``bounds" 
some naturally defined object, similar to a measured lamination, and generalizing systems of
essential disks.  This is more than an idle theoretical question:  There are certain
automorphisms (self-homeomorphisms) of $H$ called {\it generic} automorphisms, similar to
pseudo-Anosov automorphisms, which restrict to pseudo-Anosov automorphisms on $\bdry H$, see
\cite{UO:Autos}.  In \cite{UO:Autos} and \cite{LC:Tightness}, invariant
measured laminations for these automorphisms are described and analyzed, but these
laminations are still poorly understood.  If
$f:H\to H$ is a generic automorphism, then the induced $\bdry f:\bdry H\to \bdry H$ is
pseudo-Anosov, with stable and unstable invariant measured laminations, $L_+$ and
$L_-$.  These are fixed points of the action of $\bdry f:\bdry H\to \bdry H$ on
$\PML(\bdry H)$ and must lie in $\L$.  If one can understand the ``laminations"
bounded by $L_+$ and
$L_-$ in $H$, these should be closely related to the known invariant laminations for
$f$.    This gives some motivation for the more basic task we set ourselves in this
paper.  Here we will only attempt to develop a normal surface theory, and a related
theory of carriers, analogous to the theory of branched surfaces, useful for
describing the essential disks and incompressible surfaces in an irreducible
3-manifold $M$ with compressible boundary.  Of greatest interest is the case where
$M$ is a handlebody, or a compression body.  The meaning of {\it essential disk} is
clear; this a disk whose boundary does not bound a disk in
$\bdry M$.  We say a surface $S$ properly embedded in $M$ is {\it semi-essential} if
every component of $S$ is (i) an essential disk, or (ii) an incompressible, possibly
$\bdry$-compressible surface.  Semi-essential surfaces which consist of
$\bdry$-parallel annuli are not interesting, but we allow such annuli as components
of semi-essential surfaces.   We choose a triangulation $\Delta$ for $M$.  Then we
modify the triangulation as follows.  If $F=\bdry M$, the triangulation $\Delta$
induces a triangulation of $F$.  Then we decompose
$F\times I$ into prisms $\sigma \times I$, where we have one prism for every
2-simplex $\sigma$ of the induced triangulation of $F$.  Finally, we attach $F\times
I$ to $\bdry M$ by identifying $F\times 1$ with $\bdry M$.  The new manifold $M\cup
F\times I$ is homeomorphic to $M$, so we identify it with $M$.  It is ``cellulated"
by prisms of the form
$\sigma\times I$ and  3-simplices.  Henceforth, we let $\C$ denote this modified
triangulation which we call a {\it cellulation}.  We will call the prisms and
simplices (of any dimension) in this cellulation {\it cells} of the cellulation. 
The prisms of the cellulation are actually pairs $(\Sigma,\sigma)$, where $\sigma$
is a 2-simplex in $\bdry M$. For prisms of the form $\sigma\times I$, where $\sigma$
is a 2-simplex, we make the convention that
$\sigma\times 0$ lies in $\bdry M$.  

Properly embedded disks in any 3-cell $\Sigma$ of the cellulation belong to {\it
disk types}, where properly embedded disks $E_1$  and $E_2$ in $\Sigma$ have the
same type if there is a homeomorphism $\Sigma \to
\Sigma$ taking
$E_1$ to
$E_2$, mapping each vertex to itself, mapping each edge to itself, and mapping each
2-cell to itself homeomorphically.  We will define an infinite number of {\it tunnel
normal disk types} for a 3-prism
$(\Sigma,\sigma)$, and we will define a finite number of {\it tunnel normal disk
types} for a 3-simplex.  Any disk embedded in a 3-cell is a {\it tunnel normal disk}
if it belongs to a tunnel normal disk type.  A properly embedded surface $S\embed M$
is in {\it tunnel normal form} with respect to a cellulation if it intersects each
3-cell in a collection of disks, each belonging to a tunnel normal disk type.

\begin{thm}\label{NormalThm}  Let $M$ be an irreducible, orientable, compact
3-manifold with a cellulation
$\C$.  Every semi-essential surface $S$ properly embedded in $M$ can isotoped to a
tunnel normal form with respect to the cellulation, such that it intersects each
3-cell in a disjointly embedded union of tunnel normal disks. 
\end{thm}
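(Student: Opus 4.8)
The plan is to adapt Haken's classical normalization procedure, the essential new feature being that since $S$ is assumed incompressible but only possibly $\partial$-compressible, the usual arc-straightening step cannot be carried out completely; the arcs that cannot be removed are precisely what force the infinite list of tunnel normal disk types. First I would put $S$ in general position with respect to the skeleton of $\C$, isotoping it so that it is disjoint from the vertices, meets each edge transversely in finitely many points, and meets each $2$-cell in a properly embedded $1$-manifold. I would then fix a complexity for $S$, ordered lexicographically: first the total number of intersection points of $S$ with the $1$-skeleton (the \emph{weight}), then the number of components of intersection of $S$ with the union of $2$-cells. Among all surfaces isotopic to $S$ and in general position I choose one of minimal complexity, and the goal is to show this minimizer is in tunnel normal form.

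Next I would eliminate circle components of $S\cap\sigma$ for each $2$-cell $\sigma$. Taking an innermost such circle $c$, it bounds a disk $D\subset\sigma$ meeting $S$ only in $c=\partial D$. Since $S$ is incompressible, $c$ cannot be essential on $S$ (else $D$ would be a compressing disk), so $c$ bounds a disk $D'$ on $S$; then $D\cup D'$ is a sphere, and since $M$ is irreducible it bounds a ball across which $S$ can be isotoped, strictly lowering the complexity and contradicting minimality. Hence $S\cap\sigma$ consists only of arcs. Consequently the intersection of $S$ with each $3$-cell $\Sigma$ has boundary made of arcs on the faces, and a standard incompressibility-plus-irreducibility argument shows each component is a disk: any non-disk component of $S\cap\Sigma$ would, since $\Sigma$ is a ball, admit a compressing disk in $\Sigma$, whose boundary is either essential on $S$ (contradicting incompressibility) or trivial on $S$ (permitting a complexity-reducing isotopy as above). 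This reduces the theorem to classifying the disk pieces cut from $S$ by each $3$-cell.

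It remains to analyze the arcs of $S\cap\sigma$ and identify the resulting disks as tunnel normal. Arcs with endpoints on distinct edges of $\sigma$ behave exactly as in the classical theory; the difficulty lies with \emph{return arcs}, those having both endpoints on a single edge. Classically one removes an outermost return arc by a $\partial$-compression, but here $S$ need not be $\partial$-incompressible, so this move is unavailable on the faces meeting $\partial M$ and on the faces of the prisms $(\Sigma,\sigma)$ adjacent to $\sigma\times 0$. This is the main obstacle, and its resolution is the heart of the argument. Rather than removing these arcs, I would use minimality to constrain them: for an outermost return arc lying on an interior face of a $3$-simplex, the cut-off disk together with irreducibility still produces a genuine complexity-reducing isotopy, so such arcs disappear and only finitely many disk types survive in each simplex. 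On the boundary faces and within the prisms, by contrast, the surviving return arcs organize into nested families whose associated disks ``tunnel'' through the prism, and by tracking the combinatorics of these nested arcs I would verify that each such disk belongs to one of the catalogued tunnel normal types, infinitely many in a prism and finitely many in a simplex. The proof concludes by checking that any disk not of tunnel normal type would admit an innermost-disk or outermost-arc isotopy lowering the complexity; minimality then forces every piece into tunnel normal form, and the disjointness of the pieces is immediate since $S$ is embedded.
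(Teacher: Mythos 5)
Your overall strategy is the right one and matches the paper's: general position, lexicographic complexity minimization, elimination of circles via incompressibility and irreducibility, reduction to disk pieces in each $3$-cell, and then an enumeration of the surviving disk types, with the un-removable return arcs on boundary faces accounting for the tunnels. But your complexity is not fine enough to force the minimizer into tunnel normal form, and this is where the real content of the paper's proof lives. The paper uses the four-term lexicographic complexity $(b,p,a,c)$: boundary length $b=|\bdry S\cap\C^1|$ on $\bdry M$, then intersections $p$ with the vertical prism edges, then intersections $a$ with the remaining interior edges, then a \emph{concentricity} count $c$ of nested pairs of trivial arcs in the $2$-prisms. Two of your steps fail without this refinement. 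First, to push $S$ entirely off the vertical $1$-prisms (needed so that disk types in a prism meet each vertical face in at most one essential arc), one isotopes an intersection point up along the prism edge and out the top; this can \emph{increase} the number of intersections with other interior edges, so it does not decrease your single lumped weight. The paper's ordering, with $p$ strictly before $a$, is what licenses this move.

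Second, and more seriously, your final claim that ``any disk not of tunnel normal type would admit an innermost-disk or outermost-arc isotopy lowering the complexity'' is false for your complexity. A single disk piece in a prism whose boundary contains two \emph{concentric} (nested) trivial arcs in a $2$-prism face is not of any catalogued type $E(r,s,t)$ --- the catalogued types have only side-by-side tunnels. The move that fixes this (Lemma 2.2(iv) of the paper) surgers the nested pair $(\gamma,\delta)$ into two side-by-side arcs $\gamma_1,\gamma_2$; it changes neither the number of intersection points with the $1$-skeleton nor the number of arc components in the $2$-cells, so your minimizer has no reason to avoid such configurations. Indeed you describe the surviving return arcs as organizing into ``nested families,'' which is exactly the configuration that must be ruled out (within the boundary of a single disk) before the enumeration of disk types can go through. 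The concentricity entry $c(S)$ exists precisely to make this surgery a complexity-reducing move, and without it, or some equivalent device, the classification of the disk pieces as tunnel normal does not follow from minimality.
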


The details of the normal surface theory will be given in Section \ref{Normal}.  We
will define a complexity which represents a kind of combinatorial area, then normal
surfaces of minimal complexity represent minimal surfaces in this sense.  We will
also describe {\it carriers}, which generalize branched surfaces.  In particular, a
tunnel normal semi-essential surface of a minimal complexity can be shown to be
carried by a certain kind of carrier, called an {\it essential carrier}.  One can
deduce from Theorem
\ref{NormalThm} the following statement involving carriers.

\begin{thm}\label{FiniteThm}    Let $M$ be an irreducible, orientable, compact
3-manifold. There exist finitely many essential carriers $C_i,\ i=1,2,\ldots,m$ in
$M$ such that: 

\noindent (i) Every (isotopy class of a) semi-essential surface in $M$ is carried by
a carrier $C_i$. 

\noindent (ii) Every (isotopy class of an) essential curve system $L$ in $\bdry M$ is
carried by the train track $\bdry C_i=C_i\cap \bdry M$ for some carrier $C_i$, and
every curve system $L$ carried by a
$\bdry C_i$ is essential.

\noindent (iii) Every system $S$ of disks in $M$ carried by a carrier $C_i$ in the
collection is essential, i.e. every disk in the system $S$ is essential.
\end{thm}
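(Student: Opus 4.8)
The plan is to deduce Theorem \ref{FiniteThm} from Theorem \ref{NormalThm} together with two structural inputs developed in Section \ref{Normal}: first, that a tunnel normal semi-essential surface of minimal complexity is carried by an essential carrier, and second, that essential carriers satisfy monogon- and disk-of-contact-free conditions analogous to those of Floyd and Oertel for branched surfaces. The heart of the matter is a finiteness statement: up to isotopy there are only finitely many essential carriers $C_1,\dots,C_m$ in $M$. Granting this, statement (i) is immediate. Given a semi-essential surface $S$, isotope it to tunnel normal form by Theorem \ref{NormalThm}, and then within its isotopy class choose a representative of minimal complexity; such a representative is carried by an essential carrier, which must be one of the $C_i$.

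First I would prove the finiteness of essential carriers, which I expect to be the main obstacle. A carrier is assembled from local pieces, one in each 3-cell, glued along the 2-cells of $\C$. In a 3-simplex there are only finitely many tunnel normal disk types, so only finitely many local branch patterns can occur there. The difficulty is the 3-prisms $(\Sigma,\sigma)$, which admit infinitely many tunnel normal disk types. The key observation is that these infinitely many types are organized into finitely many families related by the insertion of additional tunnels, and that a single branch sector of the carrier in a prism records an entire such family: the number of tunnels is absorbed as a branch weight rather than as distinct combinatorial data. Concretely, starting from a minimal-complexity surface I would collapse each maximal product region lying between parallel tunnel normal disks; minimality of the complexity bounds the non-product part of the intersection with each 3-cell, so after collapsing, the branching locus in every cell has bounded complexity. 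Hence only finitely many combinatorial types of essential carrier arise, giving the finiteness.

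With finiteness in hand I would turn to (iii) and the two halves of (ii), all of which rest on the essentiality conditions built into the carriers. For (iii), suppose a disk system $S$ carried by some $C_i$ contained an inessential disk, that is, a disk whose boundary bounds a disk in $\bdry M$. Pushing the trivializing data into $C_i$ would produce a disk of contact or a monogon in the complement of $C_i$, contradicting the defining conditions of an essential carrier; hence every carried disk is essential. The backward direction of (ii) is the same argument one dimension down: a curve carried by the train track $\bdry C_i=C_i\cap\bdry M$ that bounded a disk in $\bdry M$ would yield a monogon or bigon complementary to $\bdry C_i$, again contradicting essentiality, so every curve system carried by a $\bdry C_i$ is essential.

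Finally, for the forward direction of (ii) I would argue intrinsically on $F=\bdry M$. Given an essential curve system $L$, normalize it with respect to the prism decomposition so that $L$ meets each bottom face $\sigma\times 0\subset\bdry M$ in normal arcs; essentiality of $L$ guarantees the normalization introduces no trivial closed components. The normal arc types occurring in $L$ are precisely the traces on $\bdry M$ of tunnel normal disks in the adjacent prisms, so they are contained in the boundary train track $\bdry C_i$ of some essential carrier from the finite list, and $L$ is therefore carried by that $\bdry C_i$. Combining the two directions yields (ii) and completes the proof.
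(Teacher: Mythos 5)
Most of your proposal runs parallel to the paper's argument: finiteness comes from the fact that the infinitely many tunnel disk types in a prism collapse to finitely many local carriers (tunnels becoming branch weights, every local carrier in a prism being a subcarrier of a single fixed object), (i) is then immediate from Theorem \ref{NormalThm} plus minimal complexity, and the backward half of (ii) together with (iii) rest on the boundary train track $\bdry C_i$ being essential (no complementary monogons or 0-gons), since a carried disk is essential exactly when its carried boundary curve is. Your (iii) detours through disks of contact and complementary monogons for the carrier itself, which is heavier machinery than needed --- the paper simply reduces (iii) to the backward half of (ii) --- but that is a matter of economy, not correctness.

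The genuine gap is in the forward direction of (ii). You argue that after normalizing an essential curve system $L$ in $\bdry M$, its arc types are traces of tunnel normal disks and hence ``contained in the boundary train track $\bdry C_i$ of some essential carrier from the finite list.'' That inference does not go through: every normal curve in $\bdry M$ is carried by the full triangular train track $\tau$ (one switch per edge, three branches per 2-simplex), but that train track also carries inessential curves --- e.g.\ vertex links bounding disks in $\bdry M$ --- so it is not the boundary of any carrier on the list, and nothing in your argument produces a \emph{particular} essential $C_i$ whose boundary carries the given $L$. The paper closes this hole by exhibiting a semi-essential \emph{surface} with the right boundary: the $\bdry$-parallel annuli $N(L)$ form a semi-essential surface (this is precisely why the paper's definition deliberately admits $\bdry$-parallel annuli as components), so by part (i) it is carried by some $C_i$, and its boundary, two parallel copies of $L$, is then carried by $\bdry C_i$. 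You need this step, or some substitute for it, to make the forward direction of (ii) valid.
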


It is not possible to prove that every surface carried by an essential carrier is
semi-essential.  In any case, non-disk, non-essential, semi-essential surfaces seem
much less important than essential disks.  Nevertheless, we want some understanding
of these surfaces.  It turns out that one must study them by subdividing the set of
all semi-essential surfaces into smaller subclasses.  We do this as follows.  We fix
a cell in the curve complex of
$\bdry M$.  This amounts to choosing a {\it primitive curve system} $X$, i.e. a
curve system
$X$ with the property that no two curves of the system are isotopic.  We now
restrict our attention to the set of surfaces $\S_X$ of all semi-essential surfaces
$S$ with $\bdry S\subset N(X)$, where $N(X)$ is a regular neighborhood of $X$ in
$\bdry M$.  There is a characteristic decomposition
$M=T_X\cup Q_X$ corresponding to this choice of $X$, where $Q_X$ is a certain
compression body, and $T_X$ is the remainder of the manifold.  $T_X$ is called the
$X$-core of
$M$ and $Q_X$ is called the $X$-characteristic compression body.  See Section
\ref{Non-disk} for details.  The
$X$-characteristic compression body is actually a pair $(Q_X,V)$, where $V$ is a
properly embedded surface in
$M$ along which one cuts $M$ to obtain $Q_X$ and $T_X$.  The exterior boundary of
$Q_X$ is denoted $W$; it is the closure in $\bdry Q_X$ of $\bdry Q_X-V$.  We also
sometimes view $Q_X$ as a pair $(Q_X,W)$, though $W$ is not in general
incompressible in $Q_X$.  The
$X$-core should be viewed as a pair
$(T_X,A)$; namely,
$V$ also appears as a subsurface of $\bdry T_X$, and $A$ is the complementary
surface in $\bdry T_X$, whose components are all annuli.

Recall a {\it half-disk} is a pair $(H,\alpha)$, where $H$ is a disk and $\alpha$ is
a closed arc in $\bdry H$.  Let $\beta$ be the complementary arc in $\bdry H$.   As
usual, we say a surface $(S,\bdry S)\embed (T_X,A)$ is {\it $\bdry$-incompressible}
if for every half-disk $(H,\alpha)$ embedded in
$(M,A)$ with $H\cap S=\beta$, there is a half-disk cut from $S$ by $\beta$.  The
definition of incompressibility is the usual one.

\begin{thm} \label{SeparationThm} Let $X$ be a primitive curve system in $\bdry M$. 
A surface $S$ of $\S_X$ is a union
$S=F\cup K$, where $K$ is a union of disks and $F$ has no disk components.  Then

\hip

\noindent (i) $(K,\bdry K)$ can be isotoped to a collection of essential disks in the
$X$-compression body $(Q_X,W)$, and  

\noindent (ii) $(F,\bdry F)$ can be isotoped to a
semi-essential surface in $(T_X,A)$.
\end{thm}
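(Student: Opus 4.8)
The plan is to exploit the characteristic compression body structure set up in Section~\ref{Non-disk}: $(Q_X,V)$ is a compression body whose exterior boundary $W$ contains $N(X)$ and whose interior boundary is $V$, with $V$ incompressible in $M$, and with the defining property that every essential disk in $M$ whose boundary lies in $N(X)$ can be isotoped into $Q_X$. Since the two assertions concern the disjoint pieces $K$ and $F$ of a single embedded surface $S$, I would construct one ambient isotopy of $S$ carrying $K$ into $Q_X$ and $F$ into $T_X$, and only afterwards read off (i) and (ii); disjointness of $K$ from $F$ is then automatic. Throughout I make $S$ transverse to $V$ and to a fixed complete meridian disk system $\mathcal{D}$ for $Q_X$, so that $F\cap W=\bdry F\subset N(X)$ and likewise for $K$.

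For (i), semi-essentiality of $S$ forces each component of $K$ to be an essential disk in $M$ with boundary in $N(X)\subset W$. The characteristic property of $(Q_X,V)$ lets me isotope these disks into $Q_X$; performing this disk by disk, beginning with an outermost one, preserves disjointness among the disks and from $F$. Once inside, each such disk is essential in $(Q_X,W)$: if its boundary bounded a disk in $W$, or if the disk were $\bdry$-parallel to $W$, the same disk would be inessential in $M$, contradicting semi-essentiality.

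The substance is (ii). Here $F$ is incompressible in $M$, has no disk components, and $\bdry F\subset N(X)$, and I must push $F$ off $Q_X$ into $T_X$ \emph{by isotopy alone}, without altering its topology. First I minimize $|F\cap(\mathcal{D}\cup V)|$. Using incompressibility of $F$ and of $V$ together with irreducibility of $M$, innermost-disk arguments remove every circle of intersection. The arcs of $F\cap\mathcal{D}$ are then treated by outermost-arc analysis: an arc $a$ outermost on a meridian disk $D$ cuts off a half-disk $H\subset D$ whose free arc lies in $\bdry D\subset W$ with endpoints on $\bdry F$. The goal is to show every such $a$ is inessential in $F$, so that $H$ realizes an \emph{isotopy} sliding a band of $F$ across $D$ out of the corresponding $1$-handle, strictly reducing $|F\cap\mathcal{D}|$. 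Iterating gives $F\cap\mathcal{D}=\emptyset$, whence $F\cap Q_X$ lies in $Q_X$ cut along $\mathcal{D}$, that is, in $V\times I$ together with balls; there the incompressible, non-disk pieces are $\bdry$-parallel and are pushed across $V$ into $T_X$. The outcome is $F\subset T_X$, and since a compressing disk for $F$ in $T_X$ would be one in $M$, $F$ remains incompressible in $T_X$; having no disk components, it is semi-essential in $(T_X,A)$.

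\hip

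The main obstacle is precisely the arc-removal step. Because $F$ is only required to be incompressible and may be genuinely $\bdry$-compressible, I cannot invoke the standard theorem that an incompressible, $\bdry$-incompressible surface isotopes out of a compression body, and the outermost half-disks $H$ cannot simply be discarded: a topology-changing $\bdry$-compression along a meridian disk would replace $F$ by a surface in a different isotopy class, which is not permitted. The real work is to prove that no \emph{essential} $\bdry$-compression of $F$ across $W$ is forced — equivalently, that each outermost arc $a\subset F\cap\mathcal{D}$ is $\bdry$-parallel in $F$ and so removable by isotopy. I expect this to follow from incompressibility of $F$ combined with the characteristic property of $V$ (an essential $\bdry$-compression of $F$ along a meridian disk, together with $D$, would build either a compressing disk for $F$, contradicting incompressibility, or a disk showing some boundary curve inessential in $\bdry M$, contradicting $\bdry F\subset N(X)$), so that the only admissible $\bdry$-compressibility of $F$ points across $A$ into $T_X$ and is retained rather than performed. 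Verifying this dichotomy, while simultaneously keeping $F$ disjoint from the already-positioned disks $K$ and guaranteeing that the complexity $|F\cap(\mathcal{D}\cup V)|$ actually decreases to zero, is the delicate technical heart of the proof.
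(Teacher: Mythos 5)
Your overall route coincides with the paper's: push $K$ into $Q_X$ by killing circles of $K\cap V$ (incompressibility of $V$ plus irreducibility), then kill the intersections of $F$ with a complete meridian system so that $F\cap Q_X$ sits in $V\times I$ union balls and can be pushed across $V$ by the product structure. (The paper's one extra convenience is to choose the complete disk system $H$ to \emph{contain} $K$, so that $F$, being disjoint from $K$, already misses those disks.) However, your write-up has a genuine gap: you explicitly defer the step you call the ``delicate technical heart,'' namely showing that every outermost arc of $F\cap\mathcal{D}$ is $\bdry$-parallel in $F$, and the dichotomy you sketch for it (an essential $\bdry$-compression across $W$ would yield a compressing disk or a trivial boundary curve) is not actually established --- indeed it is false in the generality you pose it, since incompressible surfaces can be genuinely $\bdry$-compressible.

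The point you are missing is that the arc case is vacuous. Since $F$ is incompressible and has no disk components, no boundary curve of $F$ can be isotopic to a curve of $X$ that bounds a disk in $M$: push such a boundary curve slightly into the interior of $F$; it is essential there (otherwise its component would be a disk) and it bounds a disk in $M$, contradicting incompressibility. Hence $\bdry F$ lies in $A=N(\hat X)$, which is disjoint from $\intr W$, while $\bdry\mathcal{D}\subset W$; so $F\cap\mathcal{D}$ consists of closed curves only, removable by the same innermost-circle argument you already use. This is exactly why the paper's proof speaks only of eliminating intersections ``using irreducibility'' and never confronts arcs, and it is also the content of the sentence ``$F$ is disjoint from $V\times 1$'' in the product step. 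Once you insert this observation, your argument closes up and agrees with the paper's; without it, the proof is incomplete at the very step you identify as the crux.
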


We can then analyze the semi-essential surfaces in $(T_X,A)$ using ordinary
incompressible branched surfaces: 
\begin{thm} \label{Non-DiscThm} Let $X$ be a primitive curve system in $\bdry M$.

\hip
\noindent (i) Every semi-essential surface $(S,\bdry S)\in \S_X$ in
$(T_X,A)$ which does not contain components which are
$\bdry$-parallel annuli is $\bdry$-incompressible (and incompressible) in $(T_X,A)$.

\noindent (ii) There exist finitely many incompressible branched surfaces $(B_{X,j},\bdry
B_{X,j})$ in $(T_X,A)$ such that every incompressible, $\bdry$-incompressible
surface $S$ in $(T_{X},A)$ is fully carried by one of the branched surfaces
$B_{X,j}$.  (This means every semi-essential surface in $M$ of $\S_X$ without
components which are essential disks or $\bdry$-parallel annuli is fully carried by
one of the $B_{X,j}$.)  (iii) If a surface is carried by one of the branched
surfaces $B_{X,j}$, it is incompressible and
$\bdry$-incompressible.
\end{thm}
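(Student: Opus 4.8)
The plan is to prove the three parts separately, with part (i) carrying the genuinely new content and parts (ii)--(iii) reducing to the classical theory of incompressible branched surfaces.

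For (i), I would first dispose of incompressibility. A surface $F$ of $\S_X$ lying in $(T_X,A)$ with no $\bdry$-parallel annulus components has no essential disk component: by Theorem \ref{SeparationThm} any such disk is isotoped into $Q_X$, and by the characteristic property of the decomposition a disk left in $T_X$ would be trivial, hence excluded from a semi-essential surface. Thus every component of $F$ is incompressible in $M$. Since $T_X\subset M$, any compressing disk for $F$ in $T_X$ is also a compressing disk in $M$, so $F$ is incompressible in $(T_X,A)$. The substance of (i) is $\bdry$-incompressibility. Suppose, for contradiction, that some component $F_0$ admits an essential $\bdry$-compression realized by a half-disk $(H,\alpha)$ embedded in $(M,A)$ with $H\cap F_0=\beta$, where $\beta$ does not cut a half-disk from $F_0$. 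The arc $\alpha$ lies on one annular component $A_i$ of $A$, and $A_i$ is the $T_X$-side copy of a component of the cutting surface $V$ of the characteristic decomposition $M=T_X\cup Q_X$.

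The argument then turns on the position of $\alpha$ in the annulus $A_i$. If $\alpha$ is $\bdry$-parallel in $A_i$, an outermost-arc argument together with irreducibility of $M$ lets me isotope $H$ off $A_i$ or show that $\beta$ does cut a half-disk from $F_0$, contradicting essentiality of the $\bdry$-compression. If $\alpha$ is a spanning arc of $A_i$, then performing the $\bdry$-compression and pushing the resulting frontier piece across $V$ into $Q_X$ yields one of three outcomes: a compression of $F_0$ (contradicting the incompressibility just established), an essential disk with boundary in $N(X)$ lying in $Q_X$ (contradicting that $F_0$ has no disk component and is carried in $T_X$ by Theorem \ref{SeparationThm}), or an exhibition of $F_0$ as a $\bdry$-parallel annulus (excluded by hypothesis). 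The key input is the characteristic property developed in Section \ref{Non-disk}: $V$ is incompressible and $Q_X$ is maximal among $X$-compression bodies, so no essential disk can be hidden in $T_X$. The main obstacle is precisely organizing this last step so that the three outcomes are exhaustive; I expect it to require a careful innermost-curve and outermost-arc analysis of $H\cap V$ and of the frontier annuli, using irreducibility of $M$ to remove trivial intersections, and confirming that the exclusion of $\bdry$-parallel annuli accounts for exactly the borderline case.

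For (ii) and (iii), I would invoke the classical theory of incompressible branched surfaces (Floyd--Oertel) applied to the pair $(T_X,A)$. Having shown in (i) that every non-annular surface of $\S_X$ in $(T_X,A)$ is incompressible and $\bdry$-incompressible, I would put such a surface in normal form with respect to a triangulation of $T_X$ induced from the cellulation $\C$. Normal surfaces are assembled from finitely many normal disk types, and the standard construction glues these pieces into finitely many branched surfaces; pruning those that fail the incompressibility criteria (disks of contact, monogons, Reeb components, and the corresponding $\bdry$-patches along $A$) yields the finite list $(B_{X,j},\bdry B_{X,j})$ fully carrying all incompressible, $\bdry$-incompressible surfaces, which is (ii).

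Part (iii) is the converse direction of the same theory. By construction each $B_{X,j}$ is an incompressible branched surface relative to $A$, with boundary train track $\bdry B_{X,j}=B_{X,j}\cap A$ carrying only essential curve systems; hence any surface it fully carries is incompressible and $\bdry$-incompressible, which is exactly the Floyd--Oertel criterion transported to $(T_X,A)$. I do not expect real difficulty here beyond verifying that the relative boundary structure given by the annuli $A$ does not interfere with the standard elimination of bad complementary regions. Finally, the parenthetical claim in (ii) follows by combining these branched-surface statements with the splitting of Theorem \ref{SeparationThm}, which isolates the disk and $\bdry$-parallel-annulus components.
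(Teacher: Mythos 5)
Your treatment of (ii) matches the paper's (apply Floyd--Oertel normal surface theory to the Haken pair $(T_X,A)$ with a triangulation inducing one on $A$), and your reduction of (i) to the $\bdry$-incompressibility question is fine. But the core of (i) --- the case where the arc $\alpha=H\cap A$ joins two \emph{distinct} curves $\gamma_1,\gamma_2$ of $\bdry S$ --- is where your argument has a real gap. You propose to perform the $\bdry$-compression, push the result across $V$ into $Q_X$, and sort the outcome into three cases, and you concede you cannot yet show those cases are exhaustive. The paper's argument never touches $V$ or $Q_X$ and needs no maximality of the compression body: since $\gamma_1$ and $\gamma_2$ are parallel cores of a single annulus component of $A$, they cobound an annulus in $\bdry M$; cutting that annulus along $\alpha$ gives a rectangle, which glued to two parallel copies of $H$ produces a potential compressing disk with boundary $\beta\gamma_2\beta\inverse\gamma_1\inverse\subset S$. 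Incompressibility of $S$ forces this curve to bound a disk in $S$, and that disk exhibits the component containing $\beta$ as a $\bdry$-parallel annulus --- exactly the excluded case. This is the classical ``doubling'' trick, and without it (or some completed substitute) your case analysis in (i) does not close up. The other case ($\alpha$ with both ends on one curve) you handle in essentially the paper's way, by isotoping $\alpha$ into $\bdry S$ and invoking incompressibility.

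There is a second, smaller gap in (iii). The statement concerns surfaces \emph{carried} by $B_{X,j}$, not fully carried. Floyd--Oertel gives incompressibility and $\bdry$-incompressibility only for \emph{fully} carried surfaces, which is all your argument establishes; the paper explicitly points out that the merely-carried case requires the stronger machinery of \cite{UO:MeasuredLaminations}. A surface carried but not fully carried by $B_{X,j}$ is fully carried only by some proper subcarrier, which need not be among your pruned list, so your appeal to the Floyd--Oertel criterion does not directly apply to it.
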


There is another point of view which might be preferable in some situations.  If we
focus on the complex of curves in $\bdry M$ which do not bound disks in $M$, the
mapping class group of $M$ acts on this complex, so it is a natural subcomplex of
the curve complex.  A cell of this complex corresponds to a primitive system $Y$ of
curves in $\bdry M$, none of which bounds a disk in
$M$.  Let $\S_Y$ denote the set of surfaces
$S$ with the property that $\bdry S \subset N(Y)$.   There is a decomposition
$M=T_Y\cup Q_Y$ as before.  Similarly, if $Z$ is a curve system with the property
that each curve in $Z$ bounds a disk, and $S_Z$ is the set of surfaces $S$ with
$\bdry S\subset N(Z)$, then $Q_Z$ is the entire characteristic
compression body for
$M$.

\begin{corollary} \label{SeparationTwoCor} (a) Let $Y$ be a primitive curve system
in $\bdry M$ none of whose curves bounds a disk in $M$.  Then a surface
$S$ of
$\S_Y$   can be isotoped to a semi-essential surface in $(T_Y,A)$.

\noindent (b) Let $Z$ be a primitive curve system in $\bdry M$ all of whose curves
bound disks in $M$.  Then a surface
$S$ of
$\S_Z$  is a union of essential disks and can be isotoped to a system of disks in
the characteristic compression body $(Q,V)$ for $M$.
\end{corollary}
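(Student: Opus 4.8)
The plan is to deduce Corollary 4.6 directly from the Separation Theorem (Theorem 4.4) by recognizing the two cases as extreme specializations of the general primitive curve system $X$, then identifying the resulting characteristic decompositions. So first I would recall the setup from Theorem 4.4: for a primitive curve system $X$ we have $S = F \cup K$, with $K$ a union of essential disks isotopic into $(Q_X,W)$ and $F$ isotopic to a semi-essential surface in $(T_X,A)$. The key observation is that the nature of the curves in $X$ forces one of the two pieces to be empty (up to isotopy), which is what distinguishes parts (a) and (b).

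For part (a), I would argue that if no curve of $Y$ bounds a disk in $M$, then $S$ has no essential disk components: the boundary of any disk component lies in $N(Y)$, hence is isotopic in $\bdry M$ to a curve of $Y$ (or is inessential in $\bdry M$, and an essential disk cannot have inessential boundary in this sense), and since no curve of $Y$ bounds a disk, such a disk cannot be essential. Thus in the decomposition $S = F \cup K$ the disk part $K$ is empty, so $S = F$, and Theorem 4.4(ii) gives that $S$ is isotopic to a semi-essential surface in $(T_Y,A)$. A small point to check here is the degenerate case of a disk whose boundary is inessential in $\bdry M$: such a disk is not essential and, by irreducibility of $M$, bounds a ball, so it is not a legitimate component of a semi-essential surface, and can be discarded.

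For part (b), the argument is dual. If every curve of $Z$ bounds a disk in $M$, then every component of $S$ whose boundary lies in $N(Z)$ can be taken to be a disk: any non-disk component $F$ would be incompressible with boundary isotopic to curves bounding disks, and I would argue that $F$ must in fact be $\bdry$-parallel or itself compressible, contradicting semi-essentiality unless $F$ is empty. Hence $S = K$ is a union of disks, and by Theorem 4.4(i) these are essential disks isotopic into the characteristic compression body. The identification $Q_Z = Q$, the full characteristic compression body of $M$, follows from the remark preceding the statement: when every curve of the system bounds a disk, the $Z$-characteristic compression body absorbs all the compressible structure, so $(Q_Z,V) = (Q,V)$.

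The main obstacle I expect is the careful handling of the non-disk incompressible components in part (b): ruling out the possibility that $S$ contains an incompressible, non-disk surface whose boundary happens to lie in the neighborhood of disk-bounding curves. One must show that semi-essentiality, combined with the hypothesis that every boundary curve bounds a disk in $M$, forces such a component either to be a $\bdry$-parallel annulus (which can be isotoped away or absorbed) or to be compressible (contradicting the definition). This is precisely the kind of analysis that the $X$-core/$X$-characteristic compression body decomposition is designed to carry out, so I would lean on the properties of $(Q_X,W)$ and $(T_X,A)$ established in Section 4 rather than reprove them; the corollary should then be a fairly direct packaging of Theorem 4.4 once these degeneracies are dispatched.
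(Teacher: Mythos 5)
Your proposal follows the paper's proof essentially verbatim: both parts are obtained by specializing Theorem \ref{SeparationThm}, taking $X=Y$ in part (a) (where the hypothesis that no curve of $Y$ bounds a disk forces the disk part $K$ to be empty, so $S=F$ is semi-essential in $(T_Y,A)$) and $X=Z$ in part (b) (where $\hat Z=\emptyset$ makes $Q_Z$ the entire characteristic compression body and $S=K$). The one step you flag as delicate --- ruling out non-disk incompressible components in part (b) --- is asserted without further argument in the paper as well (``$\S_Z$ consists of disks only''), so your treatment is at least as complete as the original.
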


In statement (a) of the corollary, the semi-essential surfaces in $(T_Y,A)$ can again
be analyzed using Theorem \ref{Non-DiscThm}.  In case $M$ is a handlebody, note that
a cell of the curve complex of $\bdry M$ corresponds to a primitive curve system $Y$
with the property that no curve of $Y$ bounds a disk in $M$ if and only if the cell
is disjoint from Masur's limit set.  

For a general $M$, and a ``random" primitive curve system $X\subset \bdry M$, we
have $\bdry M-N(X)$ incompressible in $M$.  In this case we can take
$(T_X,A)=(M,A)$, and the semi-essential surfaces in $(M,A)$ can be analyzed using
incompressible branched surfaces.  
\hop

\noindent {\bf Plans:}  This paper should point the way towards further research, some of
which we are already pursuing actively. 

\noindent (1)  We have already mentioned using our normal surface theory to describe
disk-limit ``laminations."

\noindent (2)  Show that generic automorphisms of handlebodies have invariant disc-limit
``laminations."

\noindent (3)  One possible way to describe disc-limit ``laminations" is to take limits in a
solutions space analogous to the solution space for conventional normal surfaces.  This a
``space of normal surfaces."  It seems that for our normal surface theory, if it is possible
to describe a solution space, it will involve parameters in certain Hilbert spaces.

\noindent (4)  We have begun to investigate a normal surface theory for essential
spheres and surfaces in reducible 3-manifolds.  Conventional normal surface theory detects
some essential spheres, but cannot detect all of them.

\section{Tunnel normal form}\label{Normal}

The purpose of this section is to prove Theorem \ref{NormalThm}.

Let $M$ be an irreducible but not necessarily $\bdry$-irreducible, orientable
3-manifold. Suppose $S\embed M$ is a  semi-essential surface, i.e. a surface whose
components are either essential disks or incompressible surfaces.  We need only
incompressibility, not $\bdry$-incompressibility.    We choose a cellulation $\C$ of
$M$.  Recall from the introduction that a cellulation is a triangulation modified
by  replacing each 2-simplex $\sigma$ in $\bdry M$ by a prism $\sigma\times [0,1]$,
with $\sigma\times 0\subset \bdry M$.  If $\epsilon$ is an edge in $\bdry \sigma$,
we say $\epsilon\times I$ is a {\it 2-prism}.  If $\rho$ is a vertex of $\sigma$, we
say $\rho\times I$ is a {\it 1-prism or prism edge.}  We refer to the cells of
different dimensions as {\it boundary cells} or {\it interior cells} according to
whether they are entirely contained in $\bdry M$. If a cell is entirely contained in $\bdry
M$ it is called a boundary cell, otherwise it is called an interior cell.  Suppose
$S$ is a semi-essential surface.  First isotope
$S$ to general position with respect to the cellulation.  We minimize the number of
intersections
$|S\cap
\C^1|$ with the 1-skeleton $\C^1$ of the cellulation.  We also use the
irreducibility assumption to eliminate trivial closed curves of intersection in
interior faces.  By the minimality, we never have a trivial innermost arc of
intersection with a 2-simplex if both ends of the arc lie on an interior edge. 
However, we may have trivial arcs of intersection on a rectangle or 2-prism which
intersects
$\bdry M$ in an edge; for such an arc both ends are in a boundary edge.  We can also
easily conclude, as in standard normal surface theory, that for every 3-simplex
or 3-prism $\Sigma$ every component of
$S\cap
\Sigma$ is a disk.  We are left with a prenormal surface as defined below.

A {\it prenormal surface} with respect to the cellulation $\C$ is a properly
embedded surface $S$ which satisfies:  (1)
$S$ is in general position with respect to the cellulation.  (2) Components of
intersection of $S$ with each 3-simplex are disks. (3) Intersections with
2-dimensional faces contain no closed curves. 

We have shown above that every essential surface $S$ can be isotoped to a prenormal
surface.

\begin{defn}For prenormal surfaces we define a lexicographical complexity
$(b,p,a,c) =(b(S),p(S), a(S),c(S))$, where the entries are defined as follows:

$b(S)=|\bdry S\cap \C^1|$, the number of intersections of $\bdry S$ with the
1-skeleton of the induced triangulation on $\bdry M$.  This is the combinatorial
{\it length of $\bdry S$}.

$p(S)=|S\cap P^1|$, where  $P^1$ is the union of prism edges (1-prisms).

$a(S)=|S\cap (\C^1-(P^1\cup\bdry M))|$, the number of intersections with interior
non-prism edges.  This is a {\it combinatorial area}.

$c(S)$ is the sum over 2-prisms $\sigma$, meeting $\bdry M$ in a 1-simplex $\epsilon$, of the
number of distinct pairs
$(\alpha,\beta)$ of arcs of $S\cap \sigma$ such that
$\alpha$ and
$\beta$ are trivial arcs with both ends in $\epsilon$, and  $\beta$ is in the half
disk cut from $\sigma$ by
$\alpha$.   This is the {\it concentricity} of
$S$.  We say that $\beta$ is {\it concentric in} $\alpha$ but the relation is not
symmetric.

We call this complexity the {\it tunnel normal complexity} or just {\it tunnel
complexity}.

If $S$ has minimal complexity among all prenormal surfaces in its isotopy class, we
say $S$ is in {\it tunnel prenormal form
 of minimal complexity}. 
\end{defn}

Trivial arcs of intersection in a 2-prism, of the type we describe above, can be
thought of as ``tunnels" if we replace them by a tubular neighborhood in $S$.  The entry
$c(S)$ can be thought of as counting the number of pairs of ``concentric" tunnels.  As is
typical in normal surface theories, we now wish to enumerate the possible combinatorial
types of the disk components of intersection of $S$ with simplices
$\Sigma$. We will always assume that $S$ is tunnel prenormal of minimal complexity.

Before we begin analyzing possible intersections of a tunnel normal surface $S$ with
various types of 3-cells, let us establish some general principles concerning
patterns of intersection of a disk of $S\cap \Sigma$ with faces of $\Sigma$. 
\begin{lemma}\label{PrinciplesLemma}  Let $S$ be a tunnel prenormal surface of
minimal complexity.  Let
$\Sigma$ be a 3-cell.  

\hip
\noindent (i) If $\Sigma$ is a 3-prism and $\Sigma\cap \bdry M$ is a 2-simplex
$\sigma$, then
$\bdry S\cap \sigma$ consists of arcs, each joining different sides of
$\sigma$, so that
$\bdry S$ is a normal curve system relative to the triangulation of $\bdry M$.

\noindent (ii) If $e$ is any 1-prism, then $S\cap e=\emptyset$.

\noindent (iii) If $\Sigma$ is any 3-cell and $E$ is a disk of $S\cap  \Sigma$, then
$\bdry E$ intersects any interior edge of $\Sigma$ at most once.

\noindent (iv) If $\Sigma$ is a 3-prism, $R=\epsilon\times I$ is any 2-prism on the
boundary of $\Sigma$ and $E$ is a disk of
$S\cap
\Sigma$, then $\bdry E$ does not contain concentric trivial arcs in $R$.
\end{lemma}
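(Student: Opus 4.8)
The plan is to prove each part by a minimal-complexity argument: if any of the four conclusions fails, I will exhibit an isotopy of $S$ that strictly decreases the tunnel complexity $(b,p,a,c)$ in the lexicographic order, contradicting minimality. So the overall strategy is the standard ``innermost disk / outermost arc'' swap, but carefully bookkept against the four-entry complexity.

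For part (ii), I would argue that the 1-prisms (prism edges) are precisely the edges that do not meet $\bdry M$, so $p(S)=|S\cap P^1|$ is a complexity entry we are entitled to minimize after $b(S)$. Suppose $S\cap e\ne\emptyset$ for some prism edge $e=\rho\times I$. Since $S$ meets each 3-cell in disks (prenormality), a point of $S\cap e$ lies on the boundary of some disk component $E$; I would isotope $S$ near $e$ to remove the intersection, using irreducibility/incompressibility to push the relevant piece of $S$ off $e$ without raising $b(S)$. The point is that an intersection with a 1-prism can be traded for an innermost-disk isotopy that lowers $p$ while leaving $b$ unchanged, contradicting minimality. For part (i), once (ii) holds, $\bdry S$ cannot pass through a vertex $\rho\times 0$ of the induced triangulation of $\bdry M$, and any arc of $\bdry S\cap\sigma$ with both endpoints on the same edge would be a trivial arc cuttable by an isotopy reducing $b(S)$ (the number of intersections of $\bdry S$ with the $1$-skeleton of $\bdry M$); minimality of $b$ then forces each arc to join distinct sides, which is exactly the normality of the curve system $\bdry S$.

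For parts (iii) and (iv) I would reduce to the lower-priority entries $a(S)$ and $c(S)$. In (iii), suppose $\bdry E$ meets an interior edge $\tau$ of $\Sigma$ twice. Then $\bdry E$ contains a subarc $\alpha$ returning to $\tau$; together with an arc of $\tau$ it bounds a bigon in the $2$-face containing $\tau$, or (more precisely) one finds an innermost such return, and isotoping $E$ across it removes two intersection points with $\tau$. Since $\tau$ is an interior non-prism edge, this strictly lowers $a(S)=|S\cap(\C^1-(P^1\cup\bdry M))|$ without increasing $b$ or $p$, again contradicting minimality. Part (iv) is the one where the bespoke entry $c(S)$ does the work: if $\bdry E$ contained two concentric trivial arcs $\alpha,\beta$ in a $2$-prism $R=\epsilon\times I$ meeting $\bdry M$ in $\epsilon$, with $\beta$ in the half-disk of $R$ cut by $\alpha$, then I would isotope $S$ so as to remove the concentric pair by sliding $\beta$ across the boundary; this decreases $c(S)$ (it removes at least one counted pair) while preserving $b,p,a$, contradicting the minimality of the concentricity.

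The main obstacle, and the step I would spend the most care on, is part (iv), and more generally verifying that the proposed local isotopies really do decrease the intended entry of $(b,p,a,c)$ \emph{without inadvertently increasing an earlier entry}. Removing a concentric trivial-arc pair in a $2$-prism is a genuinely boundary-tangent move: the arcs $\alpha,\beta$ have both ends on the boundary edge $\epsilon\subset\bdry M$, so the relevant isotopy drags part of $\bdry S$ and one must check it does not create new intersections with the $1$-skeleton of $\bdry M$ (which would raise $b$) nor with the prism edges or interior edges (raising $p$ or $a$). The honest content is thus a careful accounting in the plane of the $2$-prism, showing the ``tunnel''-removing isotopy can be chosen supported in a neighborhood of the half-disk cut by $\alpha$, so that its only effect on the complexity vector is to strictly reduce $c$. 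Once this accounting is pinned down, the other three parts follow by the same template applied to $p$, $b$, and $a$ respectively, with irreducibility invoked only to discard the trivial closed curves already eliminated in passing to prenormal form.
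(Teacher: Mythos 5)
Your overall template---derive from any failure of (i)--(iv) an isotopy that strictly decreases the lexicographic complexity $(b,p,a,c)$---is exactly the paper's, and your arguments for (i), (ii) and (iii) agree with the paper's in substance. (Two small remarks: for (ii) the paper's move is simpler than you suggest---one slides the intersection point along the prism edge $\rho\times I$ away from $\bdry M$ and off its interior endpoint, rel $\bdry S$, accepting a possible increase in the lower-priority entry $a$; no irreducibility or incompressibility is needed. For (iii), the returning subarc of $\bdry E$ need not lie in a single $2$-face, so there is no ``bigon in the $2$-face containing $\tau$''; the paper instead isotopes a band of $E$ across the disk $E_1\subset\bdry\Sigma$ bounded by $\bdry E$ that contains a segment of the edge, which is what your ``innermost return'' hedge amounts to.)

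The genuine gap is in (iv). The move you propose, ``sliding $\beta$ across the boundary,'' is not available: both endpoints of $\beta$ lie on the edge $\epsilon=\epsilon\times 0$ of the induced triangulation of $\bdry M$ and are counted in $b(S)$, and the arcs of $\bdry S$ emanating from them into the adjacent $2$-simplices are essential arcs of the already length-minimized normal curve system $\bdry S$. An isotopy pushing $\beta$ out through $\epsilon$ would delete two points of $\bdry S\cap\C^1$ and strictly decrease $b$, which is impossible since $b$ is minimal; so the trivial arcs cannot be removed, only re-routed rel their endpoints. The paper's actual move fixes all endpoints on $\epsilon$ and surgers the pattern in $R$: taking the outer arc $\gamma$ to be a non-innermost tunnel and $\delta$ concentric in it, a band isotopy of the disk $E$ along an arc $\alpha_1$ joining $\gamma$ to $\delta$ replaces the nested pair $\gamma,\delta$ by two side-by-side arcs $\gamma_1,\gamma_2$. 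You also locate the delicate accounting in the wrong place: since this surgery is supported away from the $1$-skeleton of $\bdry M$, the prism edges, and the interior edges, preservation of $b,p,a$ is automatic. The real issue is that the surgery destroys $\gamma$ and $\delta$ and creates two new arcs, so one must check that no new concentric pairs appear to offset the loss of $(\gamma,\delta)$: the paper verifies that pairs $(\gamma,\eta)$ and $(\delta,\eta)$ with $\eta$ concentric in $\delta$ disappear, and every other pair $(\gamma,\eta)$ is traded for exactly one of $(\gamma_1,\eta)$ or $(\gamma_2,\eta)$, whence $c(S)$ strictly drops. Saying the move ``removes at least one counted pair'' does not establish this, and without both the correct move and this bookkeeping your part (iv) does not go through.
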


{\bf Remark:} The following statement follows immediately from (ii)-(iv).   If
$\Sigma$ is a 3-prism, if $R=\epsilon\times I$ is any 2-prism on the boundary of
$\Sigma$ intersecting
$\bdry M$ in the edge
$\epsilon=\epsilon\times
\{0\}$, and if $E$ is a disk of
$S\cap
\Sigma$, then $\bdry E$ intersects $R$ in:

\noindent (1) a possibly empty set of trivial arcs (each with both ends in $\epsilon$)
containing no pair of concentric arcs, and

\noindent (2) at most one arc of the form $x\times I\subset \epsilon\times I=R$ with
$x=(x,0)\subset \epsilon\times I$ (up to isotopy).

\begin{proof}   (i) If $\sigma$ is a 2-simplex in $\bdry M$, and $\bdry S\cap
\sigma$ contains an arc with both ends in a side of $\sigma$, clearly  one can
reduce $b(S)$.  This amounts to showing that $\bdry S$ is normal in the
triangulation of $\bdry M$.
\hop

For any disk $E$ of $S\cap \Sigma$ as in the statements (iii),(iv), the disk can be
isotoped to either of the two disks $E_1$ and $E_2$ in $\bdry \Sigma$ bounded by
$\bdry E$. We can think of $E$ as being very close to $E_1$ or $E_2$, say $E_1$, but
still properly embedded, and we can choose a product structure between $E$ and
$E_1$. 
\hop

(ii) Suppose $e$ is a 1-prism, i.e. an edge of a prism $\Sigma=\sigma \times I$
contained in $\bdry \sigma\times I$.  If $S\cap e\ne \emptyset$ then an isotopy of
the intersection point along the edge away from $\bdry M$ eliminates the
intersection, possibly at the expense of increasing other entries.  The isotopy
should be done rel $\bdry S$, so $b(S)$ is not increased.   Notice that we achieve
$p(S)=0$.   

(iii) If $E$ is a disk in $\Sigma$ intersecting an interior edge more
than once, and we choose $E_1$ so that
$E_1$ contains a segment
$\alpha_1$ of the interior edge, with $\bdry \alpha_1\subset \bdry E$, we can isotope the
corresponding arc
$\alpha$ in
$E$  (corresponding via the product structure) to
$\alpha_1$ and beyond.  This eliminates two intersections of $S$ with an interior
edge, reducing $a(S)$.  If there are other dics of
$S\cap \Sigma$ in the product between $E$ and $E_1$, then we may be able to reduce
$a(S)$ further by eliminating other intervening arcs of intersection.  The isotopies
that we have performed may have the effect of replacing two essential arcs in
another 2-simplex (not on $\bdry \Sigma$) by an inessential arc.  The isotopies we
perform here do not affect intersections with 1-prism interior edges, so $p(S)=0$
remains true, and $b(S)$ also remains unchanged.   We have proved (iii).  For an
alternative proof which avoids the issue of additional disk components of
$S\cap \Sigma$ between
$E$ and $E_1$, assume that the disk $E$ is chosen so $E_1$ is innermost among disks
$E$ with the property that
$\bdry E$ intersects an interior edge in more than one point.

(iv) Suppose a disk $E$ of $S\cap \Sigma$ has the property that $\bdry E$ intersects
a 2-prism $R$ in $\bdry
\Sigma$ in concentric arcs.  We choose a disk $E_1$ in $\bdry \Sigma$
bounded by $\bdry E$ such that
$V=E_1\cap R$  is contained in one of the half-disks bounded by one of the arcs
$\gamma$ of $\bdry E\cap R$, but is not the entire half-disk, see Figure
\ref{TunnelConcentric}.

\begin{figure}[ht]
\centering
\scalebox{1.0}{\includegraphics{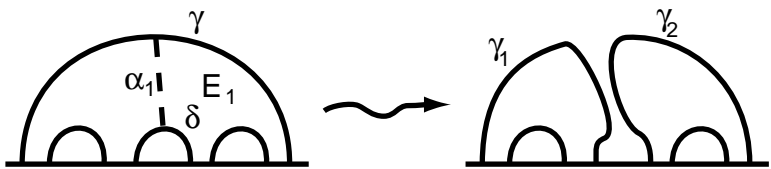}}
\caption{\small Surgering concentric arcs in a 2-prism.}
\label{TunnelConcentric}
\end{figure}

Then $\gamma$ represents a ``tunnel" which is not innermost.  We choose an arc
$\alpha_1$ joining $\gamma$ to any of the other arcs in $\bdry E_1\cap R$, say
$\delta$ concentric in
$\gamma$.  If $\alpha$ is the corresponding arc in $E$, and we isotope $\alpha$ to
$\alpha_1$, extending the isotopy to
$S$, the effect is to surger the pattern of intersection in $R$ as shown in the
figure replacing $\gamma$ and
$\delta$ by arcs $\gamma_1$ and
$\gamma_2$.  The effect on the concentricity entry of the complexity is to remove
the concentric pair
$(\gamma,\delta)$.   Also, if $\epsilon$ is concentric in $\delta$, then the surgery
removes the concentric pairs $(\gamma,\epsilon)$ and $(\delta,\epsilon)$.  For any
other concentric pair
$(\gamma,\epsilon)$, the surgery removes this pair but introduces either
$(\gamma_1,\epsilon)$ or
$(\gamma_2,\epsilon)$, resulting in no further change to $c(S)$.  We conclude that
if some $\bdry E$ contains concentric trivial arcs, then $c(S)$ can be reduced.  Clearly
this move does not change any of the previous entries of the complexity.
\end{proof}

In the above proof, we are working with a kind of combinatorial minimal surface.  It is
worth describing the analogue for Riemannian minimal surfaces at an heuristic level.  We
start by isotoping
$S$ so
$\bdry S$ becomes geodesic, then we fix $\bdry S$. Next we minimize the area of $S$ rel
$\bdry S$ by isotopy.  It is easy to understand how the entries $p(S)$ and $a(S)$
can be used to approximate a suitable Riemannian metric:  the metric must be chosen to
assign a large area to $\bdry M$ compared to areas of surfaces pushed to the interior of
$M$.  ``Tunnels" have relatively small area, but an isotopy which replaces two concentric
tunnels by two side-by-side tunnels, as in the proof of (iv), reduces Riemannian area.

Based on Lemma \ref{PrinciplesLemma} we can now enumerate the possible disk types
for a surface in tunnel prenormal form of minimal complexity.  In all of our
enumerations of disk types, it should be understood that symmetries of a 3-simplex
or 3-prism map disk types to other disk types, so we will only enumerate disk types
up to symmetries.  The disk types in 3-simplices will be the usual disk types in the
classical normal surface theory for triangulated 3-manifolds, see Figure
\ref{TunnelTypesSimplex}.  After including symmetric images of the disk types shown,
we obtain the usual 7 disk types. 
\begin{figure}[htp]
\centering
\scalebox{1.0}{\includegraphics{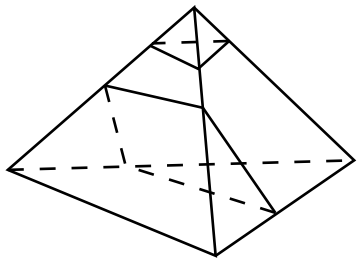}}
\caption{\small Disc types for 3-simplices of the cellulation.}
\label{TunnelTypesSimplex}
\end{figure}

\begin{figure}[htp]
\centering
\scalebox{1.0}{\includegraphics{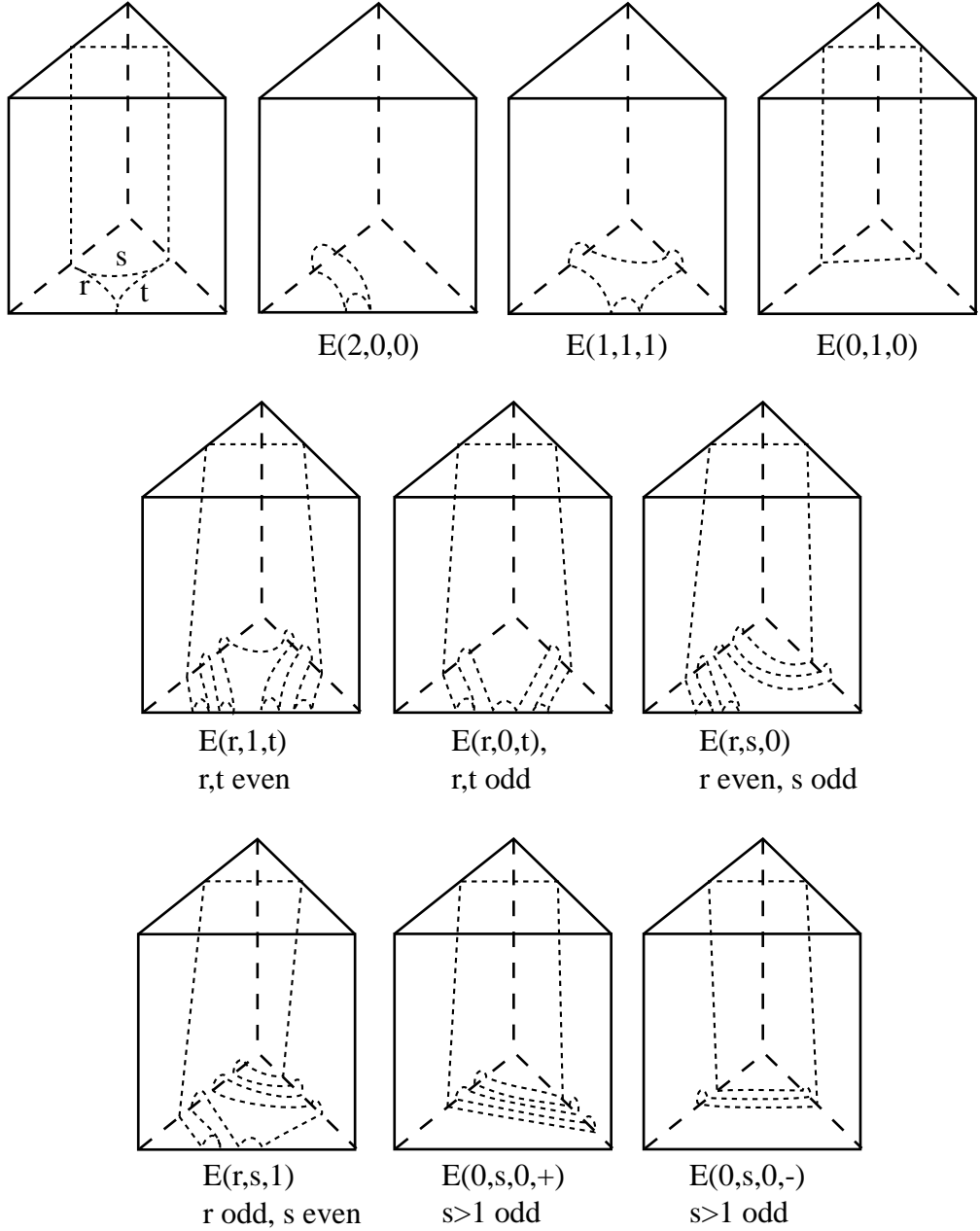}}
\caption{\small Disc types for 3-prisms of the cellulation.}
\label{TunnelTypesPrism}
\end{figure}

It is in the 3-prisms $\Sigma=\sigma\times I$ that we find infinitely many disk
types.  Recall that $\Sigma\cap
\bdry M=\sigma\times 0$.  Some of the disk types in a 3-prism $\Sigma$ are disjoint
from $\Sigma\times 1$.  These are shown in Figure \ref{TunnelTypesPrism}, labeled
$E(2,0,0)$ and
$E(1,1,1)$.  The remaining disk types intersect $\sigma\times 1$.  Lemma
\ref{PrinciplesLemma} shows that such disk type $E$ can intersect
$\sigma\times 1$ in only one essential arc joining different sides of $\sigma\times
1$.  Also, $\bdry E$ can intersect each 2-prism in at most one essential arc.   The
disk types are almost classified by the numbers of each of the essential arcs of
$\bdry E\cap (\sigma\times 0)$.  Fixing a particular view of the prism as shown, we
denote the weights induced on the essential arcs in $\sigma \times 0$ by $r,s,t$
respectively, as shown in Figure
\ref{TunnelTypesPrism}.  The disk types which arise and intersect $\sigma\times 1$
can all be regarded as being obtained by modification from the rectangular disk type
$E(0,1,0)$ shown in the figure.  We label all the disk types as $E(r,s,t)$ according
to the weights induced on the triangular train track in
$\sigma\times 0$.  There are two disk types inducing weights $(0,s,0)$, so we denote
them as $E(0,s,0,+)$ and
$E(0,s,0,-)$ as shown in Figure
\ref{TunnelTypesPrism}.  We further note that the rectangle $E(0,1,0)$ {\it could}
be regarded as a special case of $E(0,s,0,+)$ or $E(0,s,0,-)$ with $s=1$.  Or it
could be regarded as $E(r,1,t)$, with $r=t=0$, or as $E(r,s,0)$ with $r=0$ and
$s=1$.    Clearly not all triples $(r,s,t)$, with $r$, $s$ and $t$ all positive
integers, are induced by a disk type, but all such triples are induced by disjoint
unions of disks belonging to different disk types, usually in many different ways. 
\begin{defn} A {\it tunnel disk type} is a disk type in a 3-simplex $\Sigma$
isomorphic to one shown in Figure
\ref{TunnelTypesSimplex} or a disk type in a 3-prism isomorphic to one shown in
Figure \ref{TunnelTypesPrism}.  A surface $S\embed M$ is in {\it tunnel normal form}
if it intersects 3-simplices and 3-prisms only in disks belonging to tunnel disk
types.
\end{defn}

\begin{proposition}\label{SubToNormalProp}  A surface $S$ in tunnel prenormal form
of minimal complexity is in tunnel normal form.
\end{proposition}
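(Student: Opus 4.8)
The plan is to fix a 3-cell $\Sigma$ and a single disk component $E$ of $S\cap \Sigma$, and to show that the constraints assembled in Lemma \ref{PrinciplesLemma} and the Remark following it leave open only the disk types listed in Figures \ref{TunnelTypesSimplex} and \ref{TunnelTypesPrism}. Since $E$ is a properly embedded disk in a ball, its type is determined by the simple closed curve $\bdry E\subset \bdry \Sigma$ together with the combinatorial pattern of arcs it cuts on the faces, so the whole argument reduces to enumerating the admissible boundary patterns on $\bdry \Sigma\cong S^2$. The 3-simplices are handled by the classical argument: by minimality of $a(S)$ every arc of $\bdry E$ in a face is essential (joins distinct edges), and by Lemma \ref{PrinciplesLemma}(iii) $\bdry E$ meets each of the six edges at most once; a simple closed curve on the boundary sphere of a tetrahedron with these two properties is either a triangle meeting three edges or a quadrilateral meeting four edges, and counting the choices up to symmetry yields exactly the usual seven disk types of Figure \ref{TunnelTypesSimplex}.

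The substance of the proof is the case $\Sigma=\sigma\times I$ of a 3-prism, with $\Sigma\cap\bdry M=\sigma\times 0$. Here I would first record the arc data on each of the five faces. On the three 2-prisms $\epsilon_i\times I$ the Remark says $\bdry E$ consists of a (possibly empty) family of pairwise non-concentric trivial arcs, each with both ends on the bottom edge $\epsilon_i$, together with at most one essential arc of the form $x\times I$. On the top face $\sigma\times 1$, which is an interior 2-simplex, minimality of $a(S)$ forces every arc to be essential while Lemma \ref{PrinciplesLemma}(iii) allows each top edge to be met at most once; since two disjoint essential arcs would need four endpoints distributed over only three edges, $\bdry E$ meets $\sigma\times 1$ in at most one arc. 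Finally, on $\sigma\times 0$ the arcs of $\bdry S$ are normal by Lemma \ref{PrinciplesLemma}(i).

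I would then split into two cases according to whether $\bdry E$ meets the top face. If $E\cap(\sigma\times 1)=\emptyset$, no rectangle can carry an essential vertical arc, since such an arc terminates on a top edge; thus $\bdry E$ is built only from tunnels on the rectangles and normal arcs on the bottom, and tracing how these must close up into a single embedded circle produces precisely the types $E(2,0,0)$ and $E(1,1,1)$. If $\bdry E$ meets $\sigma\times 1$ in its single arc, then exactly the two rectangles whose top edges carry the endpoints of that arc can contain an essential arc (a third rectangle's essential arc would have a free top endpoint), and the remaining data are tunnels on the rectangles and normal arcs on the bottom. Assembling these realizes the family $E(r,s,t)$ as modifications of the base rectangle $E(0,1,0)$, with the sign ambiguity $E(0,s,0,\pm)$ appearing exactly in the degenerate case $r=t=0$.

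The main obstacle is the bookkeeping in this last case: one must verify that every admissible choice of non-concentric tunnels, one top arc, and the bottom normal arcs actually glues along $\bdry\Sigma$ into a single simple closed curve rather than several, that the resulting curve bounds an embedded disk of the claimed type, and that the induced weights $(r,s,t)$ together with the $\pm$ label range over exactly the list of Figure \ref{TunnelTypesPrism} and nothing more. What makes this finite and tractable rather than chaotic is precisely the absence of concentric tunnels (Lemma \ref{PrinciplesLemma}(iv)) together with the embeddedness of the single disk $E$; checking it amounts to an elementary but careful tracking of how the tunnels nest on each rectangle and how their endpoints connect through the normal arcs of the bottom triangle.
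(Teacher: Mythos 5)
Your proposal follows essentially the same route as the paper: reduce to enumerating the admissible boundary curves $\bdry E$ on $\bdry\Sigma$ subject to the constraints of Lemma \ref{PrinciplesLemma}, handle 3-simplices classically, and for a 3-prism split according to whether $\bdry E$ meets $\sigma\times 1$ (necessarily in at most one arc). The paper's own proof is likewise a sketch that carries out the final case analysis as an explicit tree of path extensions (the zig-zag branches (a), (a1), (a11), \ldots) and leaves the remaining branches to the reader, which is exactly the bookkeeping you defer at the end, so the two arguments match in both substance and level of completeness.
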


\begin{proof} We give a sketch.  Given a tunnel prenormal surface $S$ with respect
to a cellulation, suppose
$E$ is any disk of $S\cap \Sigma$  in a 3-simplex or 3-prism $\Sigma$.  For a
3-simplex, the condition that
$\bdry E$ can intersect each edge at most once easily gives the types of disks $E$
shown in Figure
\ref{TunnelTypesSimplex}, the same disk types that occur in the classical normal
surface theory.   To analyze the possible disk types in a 3-prism
$\Sigma=\sigma\times I$, one considers the possible closed curves $\bdry E$ in
$\bdry \Sigma$ such that the disk $E$ bounded by the closed curve satisfies the
conditions of Lemma \ref{PrinciplesLemma}. 

\begin{figure}[htp]
\centering
\scalebox{1.0}{\includegraphics{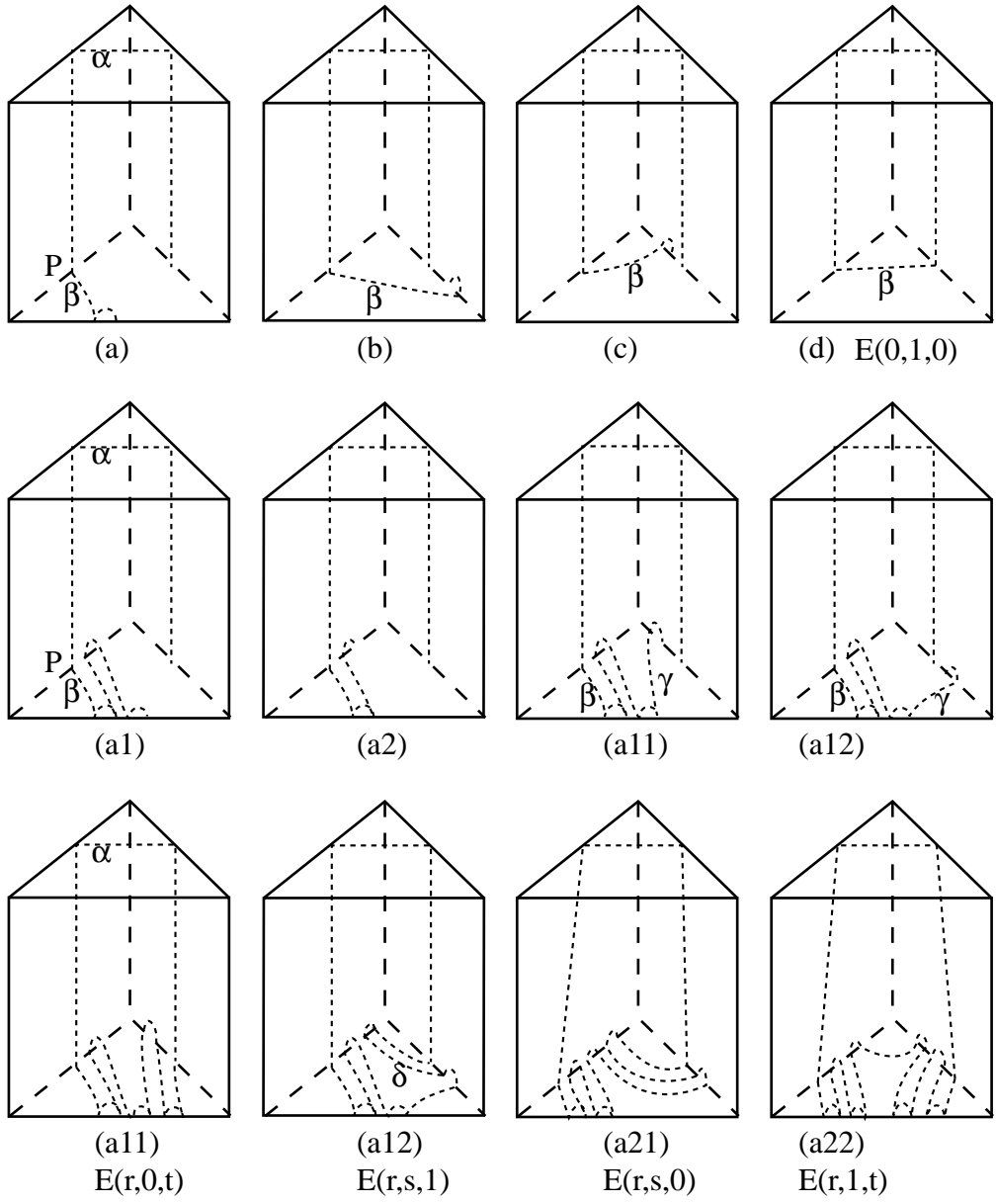}}
\caption{\small Enumerating tunnel disk types in a 3-prism.}
\label{TunnelTypesInduction}
\end{figure}

In the easiest case, we assume that $\bdry E$ is disjoint from $\sigma\times 1$. 
This means that $\bdry E$ can be subdivided into arcs which are essential in
$\sigma\times 0$ and inessential arcs, each lying in a 2-prism
$\epsilon\times I$, where $\epsilon$ is an edge in $\sigma$, and each having both
ends in $\epsilon \times 0$.  If one avoids concentric trivial arcs in $\bdry E$, as
required by Lemma \ref{PrinciplesLemma}, the only possibilities are shown in Figure
\ref{TunnelTypesPrism}, labelled $E(2,0,0)$ and $E(1,1,1)$.  In the remaining cases,
$\bdry E$ intersects $\sigma\times 1$ in just one arc, $\alpha$ say, and the ends of
this arc are connected to two essential vertical arcs, see Figure
\ref{TunnelTypesInduction}.  In our standard view, we let $P$ denote the endpoint of
the left vertical arc, as shown in the figure.  Now we consider possible extensions
of our path from $P$ by an essential arc $\beta$ in $\sigma\times 0$.  We show the
four possibilities for $\beta$ in (a), (b), (c), (d) of the figure.  In case (d), we
close the path to obtain the rectangular disk type
$E(0,1,0)$.  By Lemma
\ref{PrinciplesLemma}, in all other cases the next extension must be by a trivial
arc in one of the 2-prism faces
$\epsilon
\times I$, and one easily sees that only one of the choices is possible, as shown,
otherwise the path cannot be closed.

We must now pursue each of the possibilities indicated in (a), (b), and (c) of the
figure.  To demonstrate the method for exhausting possibilities, we will just pursue
(a).  To begin with, we can continue with a further arc isotopic to $\beta$, then
another trivial arc, then another $\beta$, etc., repeating the zig-zag pattern as
often as we wish as shown in the figure.    But the zig-zag path might end on the
edge containing $P$, see (a2), or it might end on the front bottom edge, see (a1). 
We now pursue the possibility shown in (a1).  There are two possibilities for the
next essential arc $\gamma$ not parallel to $\beta$ in $\sigma\times 0$ and for the following
inessential arc, as shown in (a11) and (a12).  Pursuing (a11), there is only a choice how
often one can zig-zag along
$\gamma$ before closing the path.  In fact, it is possible that we close the path
after traversing $\gamma$ only once.  Next, pursuing the possibility (a12), there
can be no zig-zaging along
$\gamma$; there is only one choice for the next essential arc $\delta$ as shown, and
there can be arbitrarily many zig-zags along $\delta$ before closing the path.

Now we can return to the case (a2), which leads to case (a21) and (a22) as shown.  

Clearly, we have not finished the enumeration of disk types, but it should now be
easy for the reader to fill in the remaining details of the argument.  Note that as
one follows the tree of choices for closing the paths, one arrives at some disk
types which have already appeared (up to a symmetry of the prism).  That explains
why we have already found a majority of the disk types.  
\end{proof}

\begin{proof}[Proof of Theorem \ref{NormalThm}]  Let $S$ be a semi-essential
surface.  We put it in prenormal form of minimal complexity.  Then by Proposition
\ref{SubToNormalProp}, it is in tunnel normal form.
\end{proof}

\section{Carriers}\label{Carriers}

From a classical least area normal surfaces (in the sense of Haken)  one obtains a
normal incompressible branched surface which carries it, see
\cite{WFUO:IncompressibleViaBranched}.   We now construct the analogous object for
tunnel normal surfaces. We call it a ``normal carrier,"  and describe it by
describing the possible intersections with 3-simplices and 3-prisms of our
cellulation.    Suppose $S$ is a tunnel normal surface.  If $\Sigma$ is a 3-simplex,
then $S\cap \Sigma$ is a disjoint union of disks belonging to finitely many disk
types.  We construct a {\it local carrier} for $S\cap \Sigma$, obtained by isotoping
$S\cap \Sigma$ and identifying disks of the same disk type, while also identifying
arcs in each 2-dimensional face of $\Sigma$ if they belong to the same arc type. 
The result is one of several branched surfaces like the one shown in Figure
\ref{TunnelLocalSimplex}.   The ``branching" occurs where disks of different disk
types sharing a common arc type on the boundary are identified, and we suppose that
where the disks are identified along a common arc, the tangent planes agree.  In
fact, for technical reasons, it is better to identify disk types in a regular
neighborhoods of identified arcs.  In the example of a local carrier shown in the
figure, three different disk types are represented.  There are examples with up to
five different disk types represented.  These examples of local carriers in a
simplex can also be called {\it local branched surfaces}, a special case of a local
carrier. 
\begin{figure}[htp]
\centering
\scalebox{1.0}{\includegraphics{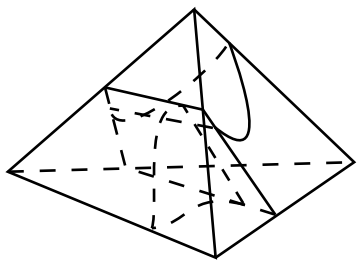}}
\caption{\small Example of a local carrier in a 3-simplex.}
\label{TunnelLocalSimplex}
\end{figure}

A local carrier in a 3-prism $\Sigma=\sigma\times I$ is a mix of branched surface
and train track, with parts of the local carrier being 1-dimensional, and other
parts 2-dimensional.  Starting with the intersection of
$S\cap\Sigma$, we begin by constructing a {\it local branched surface} in $\Sigma$,
just as we did in a tetrahedron.  Namely, we identify arcs of the same essential arc
type in any face of $\Sigma$ extending the identification to a regular
neighborhood.  Then we identify disks of the same disk type.  At this point we have
a ``branched surface" with possibly countably infinitely many sectors.  Also,
observe that the branched surface may not be embeddable.  Identifications of arcs in
$\sigma\times 0$ change ``tunnels" to tubes which appear as closed curves in
vertical faces $\epsilon \times I$ intersecting $\epsilon\times 0$ at a single
point, as shown in Figure
\ref{TunnelCollapse}.  Thus a trivial arc of intersection intersecting $\epsilon
\times 0$ in two points becomes a closed curve of intersection intersecting
$\epsilon \times 0$ in a single point.  The example shown in the figure shows the
identifications that result when
$S\cap
\Sigma$ consists of a single disk of type
$E(4,3,0)$.  The final step to obtain the carrier is to collapse the trivial closed
curves in each face
$\epsilon
\times I$ to a single point.  When this produces topological spheres in the
resulting complex, the spheres are collapsed to essential arcs in $\sigma \times
0$.  Note that in our example we obtain a local carrier consisting of a rectangle
and an edge.  We choose an embedding of the carrier such that different manifold
parts of the carrier meet tangentially where they are identified.  As usual, it is
better to extend the identification from $\bdry \Sigma$ inwards in a collar, though
for simplicity we do not always illustrate this extended identification in the
figures.  Putting together the local carriers in different simplices and prisms by
identifying arcs of the same types in faces, we obtain a {\it normal carrier} which
{\it carries}
$S$.  The carrier is chosen with a smooth structure such that the intersection with
$\bdry M$ is a train track, and the intersection with the interior of $M$ is a
branched surface. 
\begin{figure}[htp]
\centering
\scalebox{1.0}{\includegraphics{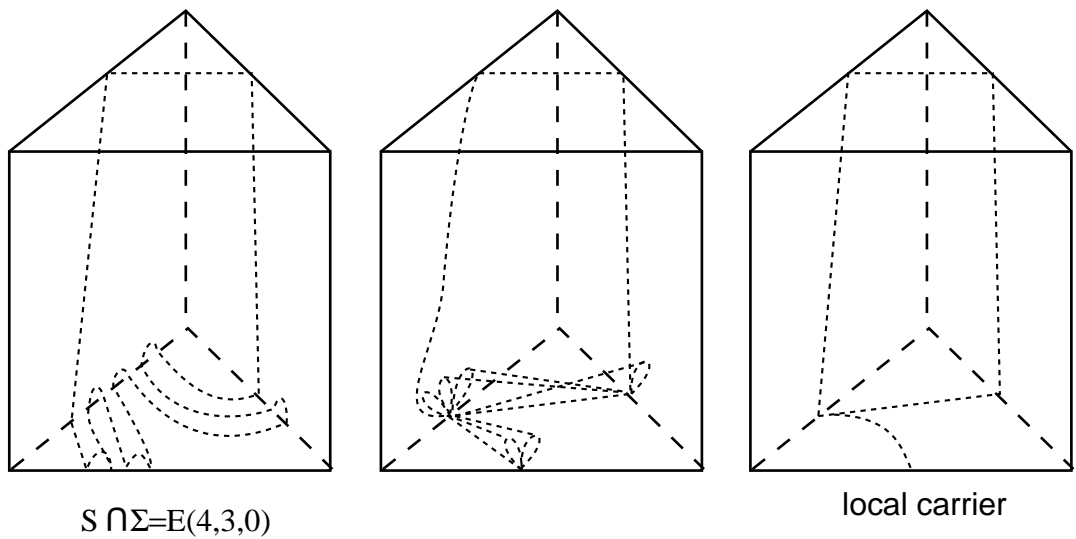}}
\caption{\small Construction of a local carrier in a 3-prism.}
\label{TunnelCollapse}
\end{figure}

In general, starting with any $S\cap \Sigma$ the local carrier obtained in a prism
is, up to symmetries, a sub-carrier of
$C_3=C_1\cup C_2$, where $C_1$ and $C_2$ are shown in Figure
\ref{TunnelLocalPrism}, and where arcs of $\bdry C_1$ and $\bdry C_2$ of the same
type in each face are identified to obtain $C_3$.   The carrier
$C_1$ consists of $\tau\times I$, where $\tau$ is a triangular train track in
$\sigma$, together with a triangular sector bounded by
$\tau\times 0$.  This triangular sector has a half-funnel shaped cusp at each of the
three vertices.   (The quality of the figure leaves something to be desired.)  Note
that the triangular sector at the bottom of $C_1$ is tangent to vertical sides of
$C_1$, which are tangent to each other where they meet.  The carrier
$C_2$ consists of a single disk with a half-funnel shaped cusp at one point on its
boundary.   Any local carrier is a sub-carrier of 
$C_3=C_1\cup C_2$.  We show some examples of local carriers in Figure
\ref{TunnelLocalPrism}.  The reader can verify that every tunnel disk type, and
every disjoint union of tunnel normal disks in a 3-prism or 3-simplex can be
isotoped respecting skeleta into a regular neighborhood of a local carrier.   
\begin{figure}[htp]
\centering
\scalebox{1.0}{\includegraphics{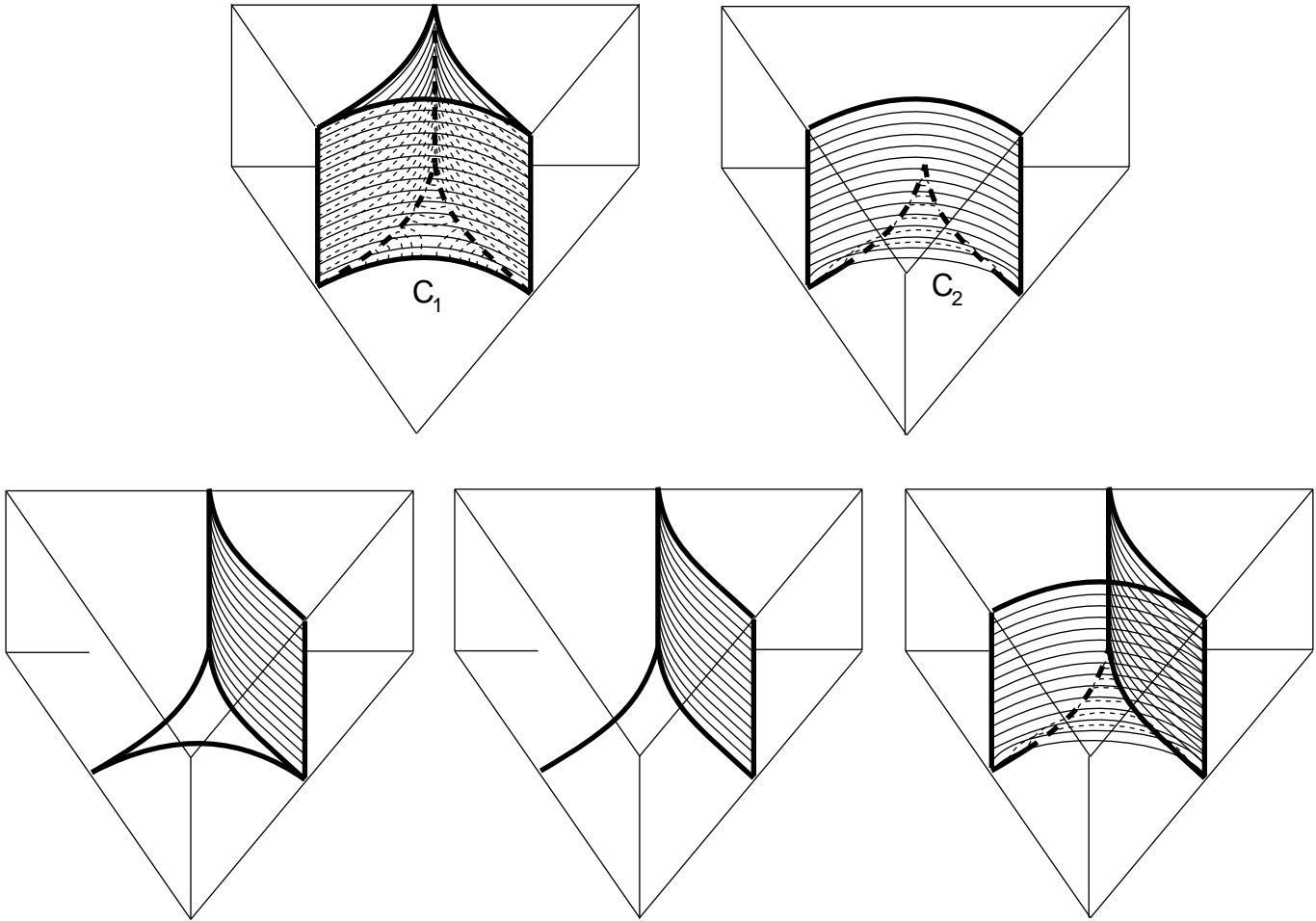}}
\caption{\small Examples of local carriers in a 3-prism.}
\label{TunnelLocalPrism}
\end{figure}

\begin{defn} A {\it normal carrier} in $M$ is a union $C$ of local carriers, one in
each 3-simplex or 3-prism, such that the closure of $C-\bdry M$ is a properly
embedded branched surface, and $\bdry C$ is a train track. 

A tunnel normal surface $S$ is {\it carried} by $C$ if it can be isotoped respecting
skeleta (i.e. isotoped without changing the types of disk intersections with
3-cells) into a regular neighborhood $N(C)$ of $C$.  The surface $S$ is {\it fully
carried} by $C$ if it is not carried by a proper subcarrier.
\end{defn}

Let us say that {\it canonical open regular neighborhood} $U(\bdry M)$ with respect
to our given cellulation is the union over 2-simplices $\sigma$ of the triangulation
of $\bdry M$ of prisms $\sigma\times [0,1)$.  Then it should be clear that $\bdry S$
is carried by $\bdry C=C\cap \bdry M$ in the usual sense, and $S-U(\bdry M)$ is
carried by the branched surface $C-U(\bdry M)$.

\begin{proof}[Proof of Theorem \ref{FiniteThm}] (i) Putting a semi-essential surface
$S$ in tunnel normal form of minimal complexity, we consider the intersection with a
single 3-simplex or 3-prism.  It intersects the 3-cell in finitely many disks
belonging to one of the tunnel normal disk types.  This union is fully carried by
one of the finitely many local carriers.  Glueing the local carriers for all the
3-cells, we obtain a carrier
$C$ which fully carries $S$.  There are just finitely many possibilities for the
local carriers, so there are just finitely many possibilities for $C$.  

(ii) We have already observed that if we put $S$ in tunnel normal form of minimal
complexity, then $\bdry S$ is normal and has minimal combinatorial length.  This
means that the carrier $C$ fully carrying $S$ has boundary $\bdry C$ an essential
train track (without complementary monogons or 0-gons).  Hence any curve system
carried by $\bdry C$ is essential.  Furthermore, for any essential curve system $L$
in $\bdry M$, there is a semi-essential surface $S$, consisting of annuli, isotopic to $N(L)$,
a regular neighborhood of $L$ in $\bdry M$.  Thus $L$ is carried by $\bdry C$, though not
necessarily fully carried.  

(iii) To check that a disk is essential, it is enough to
check that its boundary is essential.  Then (ii) shows that any disk carried by a
$C_i$ is essential.
\end{proof}

\section{Semi-essential non-disk surfaces \label{Non-disk}} 

In this section we consider arbitrary semi-essential surfaces, but the main purpose
is to give sufficient conditions for a non-disk surface to be semi-essential.  As a
preliminary, we will need to describe characteristic compression bodies in a
3-manifold. These were introduced by Francis Bonahon, see
\cite{FB:CompressionBody}.  Our version is a little more general than Bonahon's.  

Let $M$ be an irreducible, orientable 3-manifold with $W\embed \bdry M$ be a compact
essential subsurface of
$\bdry M$.  If
 ${\cal D}=\{D_1, D_2,
\ldots, D_q\}$ is a maximal collection of non-isotopic disjoint compressing disks of
$W$ in
$M$, a {\it characteristic compression body $Q$ associated to $W$} is defined to be
a regular neighborhood
$N=N(W\cup\cal D)$, with boundary spheres capped off by the balls they bound in
$M$.  Here we abuse notation by using
$\cal D$ also to denote
$\cup_iD_i$.  The characteristic compression body is a pair $(Q,U)$, where $U=\bdry
Q-\inter W=\bdry_iQ$, and
$W=\bdry_eQ$. 

The following proposition (a slight generalization of a result of F. Bonahon, see
\cite{FB:CompressionBody}) gives the essential properties of characteristic
compression bodies. 
\begin{proposition} \label{CharCompressionPropertiesProp} Let $M$ be an irreducible,
orientable 3-manifold, and
$W$ an essential surface in
$\bdry M$.

(i) If $Q$ is a characteristic compression body associated to
$W$, then
$U=\bdry_iQ$ is incompressible in $M$, but possibly with disk components.

(ii) The characteristic compression body $Q$ associated to $W$ is uniquely
determined up to isotopy.
\end{proposition}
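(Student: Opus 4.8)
The plan is to prove both parts by playing the maximality of the disk system ${\cal D}$ against the irreducibility of $M$. For (i), I would first record that, by construction, $(Q,W)$ is a genuine compression body: $Q = N(W \cup {\cal D})$ is $W \times I$ with a 2-handle attached along each $D_i$ and with 2-sphere boundary components capped by balls, so $\bdry_e Q = W$ and $\bdry_i Q = U$. Consequently $U$ is incompressible \emph{in $Q$}, which is the standard incompressibility of the interior boundary of a compression body. Now suppose some non-disk component of $U$ had an essential compressing disk $E$ in $M$ (disk components carry no essential curves, so need not be considered). Since $E \cap \bdry Q = \bdry E \subset U$, the interior of $E$ lies either in $Q$ or in $\overline{M-Q}$. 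The first case is impossible because $U$ is incompressible in $Q$. In the second case I would form $Q^+ = Q \cup N(E)$, attaching $E$ as a 2-handle along $\bdry E \subset U$, and check that $Q^+$ is again a compression body with exterior boundary $W$ and interior boundary $U$ compressed along $\bdry E$. Reading off a disk system for $W$ from $Q^+$ then exhibits ${\cal D}$ together with one extra compressing disk $D_{q+1}$ for $W$, disjoint from ${\cal D}$ and, because $\bdry E$ is essential in $U$, non-isotopic to every $D_i$. This contradicts maximality and proves (i).

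For (ii), I would follow Bonahon's uniqueness argument, adapted to an essential subsurface $W \subset \bdry M$ rather than all of $\bdry M$. Let $Q$ and $Q'$ arise from maximal systems ${\cal D}$ and ${\cal D}'$. I would isotope the two systems transverse and minimize $|{\cal D} \cap {\cal D}'|$. Irreducibility of $M$ lets an innermost-disk argument remove all closed curves of intersection, and an outermost-arc argument remove all arcs, so that ${\cal D}$ and ${\cal D}'$ become disjoint. Then ${\cal D} \cup {\cal D}'$ is a system of disjoint compressing disks for $W$, and maximality of ${\cal D}$ forces each disk of ${\cal D}'$ to be isotopic in $M$ to some disk of ${\cal D}$; by symmetry the matching is a bijection. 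From this bijection I would assemble a single ambient isotopy of $M$, fixing $W$, carrying ${\cal D}'$ to ${\cal D}$, hence $N(W \cup {\cal D}')$ to $N(W \cup {\cal D})$; capping the same spheres then shows $Q'$ is isotopic to $Q$.

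The steps I expect to require the most care are the bridges between abstract surgery and honest disks. In (i) this is the extraction of $D_{q+1}$: one must trace the cocore of the new 2-handle back through the product structure of $Q$ to produce a bona fide compressing disk of $W$ that is simultaneously disjoint from ${\cal D}$ and provably non-isotopic to each $D_i$, using that $\bdry E$ does not bound a disk in $U$. In (ii) the main obstacle is the disjointification and matching: the outermost-arc reductions must be carried out without ever pushing a disk boundary off $W$, which is only a subsurface of $\bdry M$, and the individual isotopies matching ${\cal D}'$ to ${\cal D}$ must be upgraded to one global ambient isotopy of the pair. I expect this last bookkeeping in (ii), rather than any isolated hard idea, to be where the real work lies.
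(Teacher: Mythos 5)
First, a remark on the comparison you asked for: the paper does not actually prove Proposition \ref{CharCompressionPropertiesProp}; it states it and refers the reader to the literature (``For a proof, see \cite{UO:Autos}''). So your proposal can only be judged on its own terms.

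Your part (i) is essentially right. With the usual convention that a compressing disk $E$ for $U$ meets $U$ only in $\partial E$, the interior of $E$ does lie on one side of the frontier of $Q$, so the dichotomy is legitimate; the case $E\subset Q$ is killed by incompressibility of the interior boundary of a compression body, and in the case $E\subset \overline{M-Q}$ the disk $D_{q+1}$ is produced by isotoping $\partial E$ in $U$ off the two scar copies of each $D_i$ and capping $E$ with the resulting vertical annulus in the collar of $W$; essentiality of $\partial E$ in $U$ then gives both that $\partial D_{q+1}$ is essential in $W$ and that $D_{q+1}$ is isotopic to no $D_i$. That is exactly the point you flagged, and it goes through.

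Part (ii) has a genuine gap. Your plan --- make $\mathcal{D}$ and $\mathcal{D}'$ disjoint, then match the disks bijectively up to isotopy --- would prove that any two maximal systems are isotopic \emph{as systems}, and that is false. Take $M$ a genus-two handlebody and $W=\partial M$: any maximal system gives $Q=M$, so uniqueness of $Q$ is trivial, yet one can choose essential disks $D$ and $D''$ whose boundaries have positive geometric intersection number in $\partial M$ and extend each to a maximal system; these two systems cannot even be made disjoint (disjoint properly embedded disks have disjoint boundaries), and if two maximal systems \emph{could} be made disjoint, maximality would force every disk of one to be isotopic to a disk of the other, which fails here. Concretely, the step of your argument that breaks is the outermost-arc reduction: boundary-surgering $D_i$ along an outermost half-disk of $D'_j$ replaces $D_i$ by disks whose boundaries are assembled from arcs of $\partial D_i$ and $\partial D'_j$, and these need not be isotopic to $D_i$; you end up comparing regular neighborhoods of \emph{different} disk systems, which is the very statement being proved. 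The uniqueness has to be established at the level of the compression bodies, not the disk systems, and the standard route uses part (i): since $U=\partial_i Q$ is incompressible and $M$ is irreducible, the disks of $\mathcal{D}'$ (whose boundaries miss $\partial W$) meet $U$ only in circles, which can be removed innermost-first, so $\mathcal{D}'$ and hence $Q'$ can be isotoped into $Q$; one then shows $\overline{Q-Q'}$ is a product $\partial_i Q'\times I$ together with balls, using incompressibility of $\partial_i Q'$ in the compression body $Q$ and maximality of $\mathcal{D}'$. That product-region argument, absent from your sketch, is where the real content of (ii) lies.
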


For a proof, see \cite{UO:Autos}.  The surface $U$ in the above may include disk
components.  If we remove these from $U$ we obtain a surface $V$.

Corresponding to a choice of cell in the curve complex of $\bdry M$, or a primitive
curve system $X$ as described in the introduction, we obtain another primitive curve
system $\hat X$ by eliminating from $X$ all curves bounding disks.  We let
$A=N(\hat X)$, a regular neighborhood in $\bdry M$, and we let 
$W=\bdry M-\intr(A)$, which is an essential subsurface of
$\bdry M$, without disk components.  Then we let
$(Q_X,V)$ be the characteristic compression body associated to $W$.  By
construction, the interior boundary $V$ has no disk components.  Then we obtain the
complementary 3-manifold $T_X$, the closure of $M-Q_X$.  We regard
$T_X$ as a Haken pair 
$(T_X,A)$, but observe that it could equally well be regarded as a Haken pair
$(T_X,V)$, since both $A$ and $V$ are incompressible in $T_X$.  Let us call the pair
$(T_X,A)$ the {\it $X$-core of $M$}, and let $(Q_X,V)$ be the {\it
$X$-characteristic compression body}. 

\begin{proof}[Proof of Theorem \ref{SeparationThm}]  The fact that $K$ can be
isotoped into the characteristic compression body
$Q_X$ follows from the incompressibility of $V$ and the irreducibility of $M$, $Q_X$
(and $T_X$); one eliminates curves of intersection of
$K$ with
$V$.

Now we shall prove that $F$ can be isotoped into $T_X$.   We choose a union $H$ of
disks including $K$ such that cutting $Q_X$ on $H$ yields a product
$V\times I$ and possibly some balls.  Again using irreducibility, we eliminate
intersections of $F$ with $H$, then $F\cap Q_X$ lies in a product
$V\times [0,1]$ with $V=V\times 0$.  Further, $F$ is disjoint from $V\times 1$, so
we can isotope $F$ out of the product using the product structure.  Then $F$ is
semi-essential in $T_X$.  
\end{proof}

We can then analyze the semi-essential surfaces in $(T_X,A)$ using ordinary
incompressible branched surfaces:

\begin{proof}[Proof of Theorem \ref{Non-DiscThm}] (i) Suppose $S$ is semi-essential
in $(T_X,A)$ and suppose it is
$\bdry$-compressible.  Let
$H$ be a
$\bdry$-compressing disk with arc $\alpha=H\cap A$, $\beta=H\cap S$.  There are two
cases.  If $\alpha$ connects a closed curve of
$\bdry S$ to itself, then by isotoping $\alpha$ in $\bdry M$ to an arc in $\bdry S$,
and extending the isotopy to $H$, $H$ becomes a potential compressing disk.  Since
$S$ is incompressible, $\bdry H$ bounds a disk in $S$, which implies $H$ was not a
$\bdry$-compressing disk.  The remaining possibility is that $\alpha$ connects two
different curves $\gamma_1$ and
$\gamma_2$ of $\bdry S$.  We think of $\gamma_1$ and $\gamma_2$ as paths, each
starting and ending a the same point of
$\alpha\cap \bdry S$.  With all arcs appropriately oriented,
$\beta\gamma_2\beta\inverse\gamma_1\inverse$ is a closed curve in $S$ which bounds a
potential compressing disk constructed from two parallel copies of $H$ and a
rectangular region obtained by cutting on $\alpha$ the annulus between $\gamma_1$
and $\gamma_2$.  Since $S$ is incompressible, this closed curve bounds a disk in
$S$, which implies that the component of $S$ containing
$\beta$ is a $\bdry$-parallel annulus.  (If the reader has not seen this classical
argument, we suggest drawing a picture.) 

(ii) By (i), if we diskard $\bdry$-parallel annulus components of $S$, we can now
suppose $S$ is incompressible and
$\bdry$-incompressible.  This means that we can apply classical normal surface
theory exactly as in
\cite{WFUO:IncompressibleViaBranched} to obtain the result in (ii).  The only
difference is that we are dealing with incompressible surfaces for a Haken pair
$(T_X,A)$, but the construction of the branched surfaces is exactly the same, via
normal surface theory.  One uses a triangulation which induces a triangulation of
$A$.  

(iii) The results of \cite{WFUO:IncompressibleViaBranched} also imply that any
surface fully carried by one of the
$B_{(X,j)}$'s is incompressible and $\bdry$-incompressible.  In order to prove that
{\it any\ }  surface carried by one of these branched surfaces is incompressible and
$\bdry$-incompressible, one must use the methods of
\cite{UO:MeasuredLaminations}. 
\end{proof}

\begin{proof}[Proof of Cor \ref{SeparationTwoCor}] (a)  Recall that for this theorem
we  assume $Y$ is primitive with the property that no curve of $Y$ bounds a disk in
$M$.  This means
$\hat Y$ is the same as
$Y$, and we define $Q_Y$ and $T_Y$ as before.  In Theorem \ref{SeparationThm}, with
$X=Y$,
$S=F\cup K$, but $K$, which is a union of disks, must then by empty, so $S$ is
incompressible in $T_Y$.    (b) Here $\S_Z$ consists of disks only, $\hat
Z=\emptyset$ so $S=K$ can be isotoped into $Q_Z$, which is the entire 
characteristic compression body.
\end{proof}

\bibliographystyle{amsplain}
\bibliography{ReferencesUO3}

\end{document}